\documentclass[12pt]{amsart}

\usepackage{graphicx}
\usepackage{float}
\usepackage[nice]{nicefrac}
\usepackage{amssymb}
\DeclareGraphicsRule{.tif}{png}{.png}{`convert #1 `dirname #1`/`basename #1.tif`.png}
\usepackage{amsmath,amsthm,amscd,amssymb, mathrsfs}

\usepackage{latexsym}

\numberwithin{equation}{section}
\numberwithin{figure}{section}

\theoremstyle{plain}
\newtheorem{theorem}{Theorem}[section]
\newtheorem{lemma}[theorem]{Lemma}

\newtheorem{proposition}[theorem]{Proposition}
\newtheorem{conjecture}[theorem]{Conjecture}

\theoremstyle{definition}
\newtheorem{definition}[theorem]{Definition}

\theoremstyle{remark}
\newtheorem{remark}[theorem]{Remark}

\newtheorem{case[theorem]}{Case}

\def\Ppr{\frak{P}(p,r)}
\def\hull#1{\left \langle#1\right\rangle}
%

\def\cH{{\mathcal H}}

\def\cV{{\mathcal V}}

\def\({\left(}
\def\){\right)}
\def\[{\left[}
\def\]{\right]}
\def\<{\langle}
\def\>{\rangle}
\def\rank{\mathrm{rank}\,}

\def\mand{\qquad \text{and} \qquad}

\def\F{\mathbb{F}}

\def\R{\mathbb{R}}

\def\A{\mathbb{A}}

\def\P{\mathbb{P}}


\title[Averaging operators over varieties over
 finite fields]{Averaging operators over homogeneous varieties over
 finite fields}

\author{Doowon Koh}

\author{Chun-Yen Shen}

\author{Igor Shparlinski}

\address{Department of Mathematics\\
Chungbuk National University \\
Cheon\-gju city, Chungbuk-Do 361-763 Korea}
\email{koh131@chungbuk.ac.kr}

\address{Department of Mathematics\\ 
National Central University \\
Chungli,  32054 Taiwan}
\email{chunyshen@gmail.com}

\address{Department of Computing 
\\ 
Macquarie University \\
North Ryde, NSW 2109 
Australia}
\email{igor.shparlinski@mq.edu.au}

\keywords{averaging operator,  finite fields,  homogeneous varieties}

\subjclass[2010]{Primary: 43A32; Secondary 11T23, 43A15}

\begin{document}

\begin{abstract}  In this paper we study the mapping properties of the averaging operator over a variety given by a system of homogeneous equations over a finite field.
We obtain optimal results on the averaging problems over two dimensional varieties whose elements are common solutions of diagonal homogeneous equations.
The proof is based on a careful study of algebraic and geometric properties 
of such varieties. In particular, we show that they  are not contained in any hyperplane and are  complete intersections.
We also address  partial results on averaging problems over  arbitrary dimensional homogeneous varieties which are smooth away from the origin.

\end{abstract} 
\maketitle

\section{Introduction}

\subsection{Motivation} 

Analysis in finite fields is a useful subject because it interacts with other mathematical fields. In addition, the finite field case serves as a typical model for the Euclidean case and possesses structural advantages which enable us to
relate our problems to other well-studied problems in number theory, arithmetic combinatorics, or algebraic geometry.
For these reasons,   problems in Euclidean harmonic analysis have been recently reformulated and studied in the finite field setting.
For example, see~\cite{CSW08, Dv09,  EOT, IK09, LL10,MT04,  Wo99} and references therein. In this paper we investigate $L^p-L^r$ estimates of averaging operators over algebraic varieties given by a system of homogeneous polynomials in finite fields. For Euclidean averaging problems, we refer readers to~\cite{IS96} and~\cite{Li73}. We begin with notation and definitions for averaging problems in finite fields. 
Let $\mathbb F_q^d$ be a $d$-dimensional vector space over a finite field $\mathbb F_q$ with $q$ elements. Throughout this paper, we assume that the characteristic of $\mathbb F_q$ is sufficiently large.  We denote by $dm$ the counting measure on the space $\mathbb F_q^d$.
The pair $(\mathbb F_q^d, dm)$ is named as a function space. We now consider a frequency space, denoted by the pair $(\mathbb F_{q^*}^d, dx)$, where $\mathbb F_{q^*}^d$ and $dx$ denote the dual space of $\mathbb F_q^d$ and  the normalized counting measure on $\mathbb F_{q^*}^d$, respectively. Since $\mathbb F_q^d$ is isomorphic to $\mathbb F_{q^*}^d$ as an abstract group, we  identify $\mathbb F_q^d$ with $\mathbb F_{q*}^d$. For instance, we  write $(\mathbb F_q^d, dx)$ for $(\mathbb F_{q^*}^d, dx)$. This convention helps us to avoid complicated notation appearing in doing some computations.
We shorten both $(\mathbb F_q^d, dm)$ and $(\mathbb F_q^d, dx)$ as just $\mathbb F_q^d$ if there is no risk of confusion between the function space $(\mathbb F_q^d, dm)$ 
and the frequency space $(\mathbb F_q^d, dx)$.
Let $V$ be an algebraic variety in the frequency space $(\mathbb F_q^d, dx)$. We endow $V$ with a normalized surface measure, denoted by $d\sigma$, which can be defined by the relation
$$ \int f(x)~ d\sigma(x)=\frac{1}{|V|} \sum_{x\in V} f(x),$$
where $f:(\mathbb F_q^d, dx)\to \mathbb C$ and $|V|$ denotes the cardinality of $V$. 
Notice that we can replace $d\sigma(x)$ by $q^d|V|^{-1}  V(x) dx$, where $V(x)$ indicates the characteristic function on  $V$. 
Then the convolution function of $f$ and $d\sigma$ is defined on $(\mathbb F_q^d, dx)$:
$$ f\ast d\sigma(y)=\int_{\mathbb F_q^d} f(y-x)~ d\sigma(x)
=\frac{1}{|V|} \sum_{x\in V} f(y-x).
$$
In the finite field setting, the averaging problem is to determine $1\leq p,r\leq \infty$ such that
\begin{equation}\label{defA}\|f\ast d\sigma\|_{L^r(\mathbb F_q^d, dx)} \leq C \|f\|_{L^p(\mathbb F_q^d, dx)} \quad\text{for all}~~f: \mathbb F_q^d \to \mathbb C,\end{equation}
where $C>0$ is independent of the function $f$ and the size of the underlying finite field. 

\begin{definition}
\label{def:Ppr}
We use $\Ppr$ to indicate that inequality~\eqref{defA} holds. 
\end{definition}

As an analogue of averaging problems in Euclidean space, this problem has first been addressed by Carbery, Stones and Wright~\cite{CSW08}.
They mainly investigated the $L^p-L^r$ estimates of the averaging operator over a $k$-dimensional variety given by a vector-valued 
polynomial $P_k: \mathbb F_q^k \to \mathbb F_q^d$.  In particular,
Carbery, Stones, and Wright~\cite{CSW08}  consider a variety 
$\cV_k\subseteq \mathbb F_q^d, 1\leq k\leq d-1$, 
which is given by the range of $P_k: \mathbb F_q^k\to \mathbb F_q^d$ defined by 
$$P_k(t)=\left(t_1,t_2,\ldots, t_k, t_1^2+t_2^2+\cdots+t_k^2, 
\ldots, t_1^{d-k+1}+\cdots+ t_k^{d-k+1}\right)
$$
for
$$
t=(t_1, t_2, \ldots,t_k)\in \mathbb F_q^k.
$$
Observe that the generalized parabolic variety $\cV_k$ can be written by 
\begin{equation}\label{CarberyV} \cV_k=\{x\in \mathbb F_q^d: g_1(x)=g_2(x)=\cdots=g_{d-k}(x)=0\},\end{equation}
where $g_j(x)=x_1^{j+1} +x_2^{j+1}+\cdots+ x_k^{j+1}-x_{k+j}$ for $j=1,2, \ldots, d-k$.
Namely, the variety $\cV_k$ is exactly the collection of  the common solutions of the $d-k$ equations: $g_j(x)=0$ for $j=1,2, \ldots, d-k$.
It is clear that $|\cV_k|=q^k$ for all $k=1,2, \ldots, d-1$, because $x_{k+1}, \ldots, x_d\in \mathbb F_q$ are uniquely determined whenever we choose $x_1,x_2,\ldots, x_k\in \mathbb F_q$.
Applying the  Weil theorem~\cite{We48},  the aforementioned authors~\cite{CSW08} have obtained  the sharp Fourier decay estimates on the variety $\cV_k$ and, 
as a consequence, they give the complete solution of the averaging problem over the variety $\cV_k$. 
Before we present the result of~\cite{CSW08}, we need to introduce one 
more notation:

\begin{definition}
\label{def:ConHul}
For points $P_1, \ldots, P_s \in \R^2$ of the Euclidean plane,
we use $\hull{P_1, \ldots, P_s}$ to denote their {\it convex hull\/}.
\end{definition}

We use Definitions~\ref{def:Ppr} and~\ref{def:ConHul} to formulate 
our main results, in which  $\Ppr$ is related to belonging the 
point $(1/p, 1/r)$ to certain convex polygon. 

We also denote
$$
P_{0,0} = (0,0),\qquad  P_{0,1} = (0,1), \qquad  P_{1,1} =  (1,1).
$$
It is shown in~\cite{CSW08} that
\begin{equation}\label{Carbery} 
\Ppr \iff \(\frac{1}{p}, \frac{1}{r}\)  \in \hull{P_{0,0},P_{0,1},P_{1,1}, \left (\frac{d}{2d-k}, \frac{d-k}{2d-k}\right)}.
\end{equation}
However, if the variety $\cV_k$ is replaced by the homogeneous variety $\cH_k$ defined as
\begin{equation}\label{defH} \cH_k=\{x\in \mathbb F_q^d: h_1(x)=h_2(x)=\cdots=h_{d-k}(x)=0\},\end{equation}
where $h_j(x)=x_1^{j+1} +x_2^{j+1}+\cdots+ x_k^{j+1}-x_{k+j}^{j+1}$ 
for $j=1,2, \ldots, d-k$, then the averaging problem over $\cH_k$ becomes much harder. There are two main reasons why it is difficult to find sharp $L^p-L^r$ averaging estimates over $\cH_k$. First, it is not clear to find the size of $\cH_k$.  Second,  the computation of the Fourier decay estimate on $\cH_k$ is not easy, in part because it can not be obtained by simply applying the Weil theorem~\cite{We48}. Moreover, it may be possible that the Fourier decay on $\cH_k$ is slower  than that on $\cV_k$, because the homogeneous variety $\cH_k$ contains lots of lines which could be key factors to 
make $\cH_k$ flat. These reasons 
suggest  that the $L^p-L^r$ averaging estimates over $\cV_k$ maybe be much better than those over $\cH_k$.  In some cases, it is true but  is not always true in the finite field setting. Indeed, we show here that if $k=2$, then $\cV_k$ and $\cH_k$ yield the same $L^p-L^r$ averaging estimates. 

\subsection{Conjecture on the averaging problem over $\cH_k$} 

The $L^p-L^r$ averaging estimates over $\cH_k$ depend on the maximal dimension of subspaces lying in the variety $\cH_k$. Let us denote by $d\sigma_k$  the normalized surface measure on $\cH_k$.
For a moment, let us assume that $|\cH_k| =(1+o(1)) q^k$ for $k=2,3,\ldots, d-1$, which 
in fact follows from Proposition~\ref{Pro2} and Lemma~\ref{lem:LW} below. 

We recall that  for any real  $U$ and $V$,  $U\lesssim V$ or $V\gtrsim U$ means that there exists $C>0$ independent of $q$ such that $|U|\leq C V$, and $U\asymp V$ is used to indicate that $U\lesssim V$ and $ V\lesssim U$. Throughout the paper, the implied constants may depend on degrees and the number of variables of the polynomials 
defining algebraic varieties under consideration, 
in particular on the integer parameters $d, k, s$.

Suppose that  the following averaging estimate over $\cH_k$ holds true for $1\leq p,r\leq \infty$:
$$
\|f\ast d\sigma_k\|_{L^r(\mathbb F_q^d, dx)} \lesssim \|f\|_{L^p(\mathbb F_q^d, dx)} \quad\text{for all}~~f: \mathbb F_q^d \to \mathbb C.
$$
Then  taking  $f=\delta_0$ as a test function, it follows that 
\begin{equation}\label{necessary1}
\Ppr \Longrightarrow \(\frac{1}{p}, \frac{1}{r}\)  \in 
\hull{P_{0,0},P_{0,1},P_{1,1},\(\frac{d}{2d-k}, \frac{d-k}{2d-k}\right)},
\end{equation}
where $\delta_0(x)=1$ if $x=(0,\ldots,0)$ and $\delta_0(x)=0$ otherwise. 
In fact, this necessary condition for $\Ppr$ has been observed by the authors in~\cite{CSW08} who have also
remarked that if $\cH_k$ contains an $\alpha$-dimensional subspace $\Pi_k$ with $\alpha >k/2$, then the necessary condition~\eqref{necessary1} 
can be improved as 
\begin{equation}\label{imnecessary} 
\Ppr \Longrightarrow \(\frac{1}{p}, \frac{1}{r}\)  \in 
\hull{P_{0,0},P_{0,1},P_{1,1}, Q_{d,k,\alpha}, R_{d,k,\alpha}}, 
\end{equation}
where
\begin{equation*}
\begin{split}
Q_{d,k,\alpha} & = \(\frac{k^2+\alpha d-2\alpha k}{k(d-\alpha)},
\frac{\alpha(d-k)}{k(d-\alpha)}\),\\
R_{d,k,\alpha} & =  
\left(\frac{d(k-\alpha)}{k(d-\alpha)}, \frac{(d-k)(k-\alpha)}{k(d-\alpha)}\right).
\end{split}
\end{equation*}
Hence, to find a more precise necessary condition for $\Ppr$, we need to observe the maximal dimension $\alpha$ of the subspaces $\Pi_k$ lying in the homogeneous variety $\cH_k$.
Since $\alpha$ is always an integer, if $k$ is odd, then it is impossible that $\cH_k$ contains a $k/2$-dimensional subspace $\Pi_k$. 
In this case, if $-1\in \mathbb F_q$ is a square number, then it may happen that $|\Pi_k|=q^{(k+1)/2}$ (namely $\alpha=(k+1)/2$). However,  if $k$ is even, then $\alpha$ can be taken 
as an integer $k/2$.
Combining these observations with~\eqref{necessary1} and~\eqref{imnecessary}, we are lead to the following conjecture (see Figure~\ref{fig}).
\begin{conjecture}\label{conj} For each $k=2,3,\ldots, d-1$, let $\cH_k$ be the homogeneous variety defined as~\eqref{defH}.
\begin{itemize}
\item If $k$ is even, we have
$$ \Ppr \iff \left(\frac{1}{p}, \frac{1}{r}\right) \in 
\hull{P_{0,0},P_{0,1},P_{1,1},\left (\frac{d}{2d-k}, \frac{d-k}{2d-k}\right)}.$$

\item If $k$ is odd and $-1\in \mathbb F_q$ is a square, then 
\begin{equation*}
\Ppr \iff  \left(\frac{1}{p}, \frac{1}{r}\right) \in \hull{P_{0,0},P_{0,1},P_{1,1}, S_{d,k}, T_{d,k}},
\end{equation*}
where
\begin{equation*}
\begin{split}
S_{d,k} & =  \left (\frac{dk-2k+d}{k(2d-k-1)}, \frac{(k+1)(d-k)}{k(2d-k-1)}\right),\\
T_{d,k} & =  \left(\frac{d(k-1)}{k(2d-k-1)}, \frac{(d-k)(k-1)}{k(2d-k-1)}\right).
\end{split}
\end{equation*}
\end{itemize}
\end{conjecture}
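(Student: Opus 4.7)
The necessary direction of both parts is already established by the test-function arguments giving~\eqref{necessary1} and~\eqref{imnecessary}, so only the sufficient direction remains. By complex interpolation with the three trivial $L^p - L^r$ bounds at $P_{0,0},P_{0,1},P_{1,1}$, it suffices to produce the endpoint estimate at the remaining extremal vertex: the single point $\(d/(2d-k),(d-k)/(2d-k)\)$ for even $k$, and the two points $S_{d,k},T_{d,k}$ for odd $k$ with $-1\in \mathbb F_q$ a square. The plan is to follow a finite field Tomas--Stein strategy driven by pointwise bounds on
$$ \widehat{d\sigma_k}(\xi)=\frac{1}{|\cH_k|}\sum_{x\in \cH_k} \chi(\xi\cdot x), $$
where $\chi$ is a nontrivial additive character of $\mathbb F_q$.

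The first ingredient is the cardinality $|\cH_k|=(1+o(1))q^k$, which follows from the Lang--Weil estimate once one knows that $\cH_k$ is a geometrically irreducible complete intersection of dimension $k$; the complete intersection property is part of the paper's algebro-geometric analysis. The second ingredient is Fourier decay: using that $\cH_k$ is a complete intersection smooth away from the origin and is not contained in any hyperplane, Deligne's purity theorem yields $|\widehat{d\sigma_k}(\xi)|\lesssim q^{-k/2}$ on a generic set of frequencies. However, every linear subspace $\Pi_k\subset \cH_k$ produces a set of ``bad'' frequencies supported on its annihilator on which this decay fails; these frequencies are precisely what forces the corrections $Q_{d,k,\alpha},R_{d,k,\alpha}$ in~\eqref{imnecessary} to enter the necessary condition.

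The third ingredient is an interpolation that converts the pointwise decay into an $L^p - L^r$ bound. One splits $\widehat f$ into the zero frequency, the ``good'' frequencies where the $q^{-k/2}$ decay applies, and the ``bad'' frequencies lying on annihilators of subspaces inside $\cH_k$. On the good part one combines Plancherel with the Fourier decay, on the bad part one uses H\"older's inequality together with a count of the annihilators, and one balances against the trivial $L^1 - L^\infty$ bound $\|d\sigma_k\|_\infty \asymp q^{d-k}$. Balancing exponents yields an endpoint inequality matching the remaining vertex, and interpolation with the three trivial vertices fills in the convex hull.

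The serious difficulty is controlling the linear subspaces inside $\cH_k$ precisely enough to match the conjectured vertices. For even $k$ one must prove that no subspace of $\cH_k$ has dimension exceeding $k/2$, so that~\eqref{imnecessary} with $\alpha=k/2$ collapses back to~\eqref{necessary1}; for odd $k$ with $-1$ a square one must actually exhibit a subspace of dimension $(k+1)/2$, prove its maximality, and tune the Tomas--Stein argument to saturate the vertices $S_{d,k},T_{d,k}$ rather than a strictly weaker pair. The classification of linear subspaces in an intersection of diagonal forms of consecutive degrees becomes increasingly intricate as $k$ grows, and this is why the paper is able to settle the conjecture in full only in the case $k=2$, where the plane geometry of $\cH_2$ is tractable.
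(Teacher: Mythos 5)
The statement you are addressing is Conjecture~\ref{conj}, which the paper does not prove: it is established only for $k=d-1$ (in~\cite{KS11} and~\cite{Koh13}) and for $k=2$ (Theorem~\ref{main1} of this paper), and the authors explicitly state that no results are known for $3\le k\le d-2$. Your proposal is a strategy outline rather than a proof, and every step that makes the conjecture hard is named but not executed. Concretely: (a) the claimed decay $|\widehat{d\sigma_k}(\xi)|\lesssim q^{-k/2}$ ``on a generic set of frequencies'' is not proven anywhere, and it cannot come from the tools in the paper --- Lemma~\ref{maindecay} gives only $q^{-(k-1)/2}$, and only under the smoothness hypothesis, which by Proposition~\ref{Pro3} fails whenever $d-k\ge 4$; obtaining square-root cancellation off a controlled exceptional set for these singular complete intersections is precisely the open problem. (b) The ``bad'' frequency set is never identified, the linear subspaces of $\cH_k$ are never classified or counted, and the H\"older-plus-counting step on the bad part is not carried out, so the claimed balancing that ``yields an endpoint inequality matching the remaining vertex'' is an assertion, not an argument. (c) Even the necessity direction is incomplete for odd $k$: the implication~\eqref{imnecessary} with $\alpha=(k+1)/2$ (which does reproduce $S_{d,k}$ and $T_{d,k}$) requires actually exhibiting a subspace of $\cH_k$ of dimension $(k+1)/2$, and the paper only says this ``may happen''; you acknowledge this must be done but do not construct it.

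Your diagnosis of where the difficulty lies --- the interplay between linear subspaces of $\cH_k$ and the failure of uniform Fourier decay --- is consistent with the authors' own discussion, and the Tomas--Stein-type decomposition you describe is a reasonable template (it is essentially how Lemma~\ref{formula} is used, augmented by a good/bad splitting). But since none of the three key inputs is established, the proposal does not constitute a proof of the conjecture, nor could it be checked against one, since the paper contains none.
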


 When  $d\geq 3$ is odd and $k=d-1$, it is observed in~\cite{KS11} that the conjecture holds true. 
In the case when $d\geq 4$ is even and $k= d-1$, the conjecture has recently
been established in~\cite{Koh13}. 
Namely, the averaging problem over $H_{d-1}$ has been completely solved where $H_{d-1}=\{x\in \mathbb F_q^d: x_1^2+x_2^2+\ldots+x_{d-1}^2-x_d^2=0\}$ and $-1\in \mathbb F_q$ is a square.
However, there are no known results on the conjecture for $k=2,3,\ldots, d-2$.

\subsection{Statement of main results}

Our first result below says that  Conjecture~\ref{conj} is true for any integer $d\geq 3$ and $k=2$ (see Figure~\ref{fig}).

For each $k=2,3, \ldots, d-1$, let $d\sigma_k$ be the normalized surface measure on  the homogeneous variety $\cH_k\subseteq\mathbb F_q^d$  
given in~\eqref{defH}.

\begin{theorem} \label{main1} If  $d\geq 3$ is an integer and $k=2$, then, 
assuming that the characteristic of $\mathbb F_q$ is sufficiently large,
$$\Ppr \iff \(\frac{1}{p}, \frac{1}{r}\)  \in 
\hull{P_{0,0},P_{0,1},P_{1,1}, \left(\frac{d}{2d-2}, \frac{d-2}{2d-2}\right)}.
$$
\end{theorem}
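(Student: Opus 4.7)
For the necessity direction, the inclusion of $(1/p,1/r)$ in the claimed convex hull follows directly from the general condition~\eqref{necessary1} together with the observation that for $k=2$ the potential strengthening~\eqref{imnecessary} is vacuous. Indeed $\dim\cH_2=2$ and $|\cH_2|=(1+o(1))q^2$ (by Proposition~\ref{Pro2} combined with Lemma~\ref{lem:LW}), so an $\alpha$-dimensional subspace in $\cH_2$ with $\alpha>k/2=1$ would force $\alpha=2$ and make $\cH_2$ essentially a subspace, which contradicts its non-linear defining equations in large characteristic.

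For sufficiency, by interpolation with the three trivial vertex estimates $\Ppr$ at $P_{0,0}$, $P_{0,1}$, $P_{1,1}$, it is enough to establish the single critical endpoint $(1/p_0,1/r_0)=(d/(2d-2),(d-2)/(2d-2))$. The plan is a Stein--Tomas splitting: write $f*d\sigma_2=E_0f+Rf$ where $E_0f:=\int f\,dx$ absorbs the zero frequency (note $\widehat{d\sigma_2}(0)=1$) and is harmless, and the remainder $R$ satisfies
\[
\|R\|_{L^1\to L^\infty}\lesssim\|d\sigma_2\|_\infty\lesssim q^{d-2},\qquad \|R\|_{L^2\to L^2}=\max_{m\ne 0}|\widehat{d\sigma_2}(m)|.
\]
Riesz--Thorin interpolation at $\theta=(d-2)/(d-1)$ yields a $q$-uniform $L^{p_0}\to L^{r_0}$ bound on $R$ precisely under the square-root-cancellation estimate
\begin{equation}\label{eq:Fp}
\max_{m\ne 0}|\widehat{d\sigma_2}(m)|\lesssim q^{-1},\qquad\text{equivalently}\qquad\Bigl|\sum_{x\in\cH_2}\psi(m\cdot x)\Bigr|\lesssim q
\end{equation}
for every $m\ne 0$ and every nontrivial additive character $\psi$ of $\F_q$.

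To produce~\eqref{eq:Fp} I would exploit the homogeneity: writing $\cH_2$ as the affine cone over its projectivization $\cH_2^*\subseteq\mathbb{P}^{d-1}$ and separating the origin from the nonzero orbits,
\[
\sum_{x\in\cH_2}\psi(m\cdot x)=1+q\,N_m-|\cH_2^*|,\qquad N_m:=|\{[v]\in\cH_2^*:m\cdot v=0\}|.
\]
Since $\cH_2^*$ is a projective curve, the classical bound $|\cH_2^*|=O(q)$ holds (with constant depending on the degree), so everything reduces to $N_m=O(1)$ uniformly in $m\ne 0$. Here the two structural inputs provided by Proposition~\ref{Pro2} enter decisively: because $\cH_2$ is not contained in any hyperplane of $\F_q^d$, the curve $\cH_2^*$ is not contained in $\{m\cdot v=0\}$ for any $m\ne 0$, so $\cH_2^*\cap\{m\cdot v=0\}$ is a proper, zero-dimensional subscheme of $\cH_2^*$; and because $\cH_2$ is a complete intersection of the $d-2$ diagonal forms $h_1,\dots,h_{d-2}$ of degrees $2,3,\dots,d-1$, the degree of $\cH_2^*$ is at most $(d-1)!$ by Bezout, so $N_m\le (d-1)!$. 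This gives~\eqref{eq:Fp} and closes the sufficiency.

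The principal obstacle is not the analytic part --- which, as sketched above, is classical once~\eqref{eq:Fp} is in hand --- but the algebro-geometric input: verifying that $\cH_2$ really is a complete intersection of the diagonal forms $h_j$ and is genuinely non-degenerate (not contained in any hyperplane). These are precisely the properties attributed in the abstract to Proposition~\ref{Pro2}, and they are what make the homogeneous case $\cH_2$ as tractable as the graph case $\cV_2$ of~\cite{CSW08}; without them the Weil-type bound used there breaks down because of the many lines through the origin lying on $\cH_2$, which a priori could make the variety ``flat'' in the sense of inflating the Fourier transform on some codimension-one subspace.
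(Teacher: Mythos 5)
Your analytic skeleton is the same as the paper's. The necessity is exactly the $\delta_0$ test plus duality behind~\eqref{necessary1}, and the sufficiency is the decomposition $d\sigma_2=\widehat{K}+1$ with interpolation between $\|R\|_{L^2\to L^2}\lesssim q^{-1}$ and $\|R\|_{L^1\to L^\infty}\lesssim q^{d-2}$, which is Lemma~\ref{formula} specialised to $s=d-2$, $\vartheta=2$; your endpoint $(d/(2d-2),(d-2)/(2d-2))$ is the correct one. The one genuine (and welcome) deviation is your derivation of the exponential sum bound $\bigl|\sum_{x\in\cH_2}\psi(m\cdot x)\bigr|\lesssim q$ by slicing the affine cone over the projectivization and bounding $N_m$ by Bezout: the paper instead cites Cochrane's theorem (Lemma~\ref{Igor1}), but the mechanism is the same, and like the paper's route yours correctly requires no smoothness hypothesis --- which matters, since by Proposition~\ref{Pro3} the variety $\overline{\cH}_2$ fails to be smooth away from the origin as soon as $d\geq 6$.

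The genuine gap is that you assume Propositions~\ref{Pro1} and~\ref{Pro2} rather than prove them, and these are where essentially all of the paper's work on Theorem~\ref{main1} lies. Your cone computation needs (i) $\overline{\cH}_2$ absolutely irreducible of dimension $2$ over $\overline{\F}_q$ --- without irreducibility a whole component of the projective curve could lie in $\{m\cdot v=0\}$, giving $N_m\asymp q$ and destroying the bound --- (ii) non-degeneracy, and (iii) $|\cH_2|\asymp q^{2}$ for the $L^1\to L^\infty$ endpoint and the normalisation of $d\sigma_2$; all three rest on those propositions. Non-degeneracy is handled by an explicit exhibition of $d$ independent points, but absolute irreducibility is the hard step: the paper proves it by induction on the number of defining forms, using Lemma~\ref{lem:LW} to reduce irreducibility to the asymptotic point count~\eqref{eq:N_k}, which in turn requires cancellation in the multiplicative character sums $\sum_{t}\prod_j\chi_j^{i_j}(t^{j+1}+A_j(\overline x))$. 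For $k=2$ this is Lemma~\ref{endlem3}, whose proof needs the polynomial Zsigmondy theorem of Flatters and Ward to guarantee that the relevant product of shifted monomials is not a perfect power of the forbidden shape before Wan's form of the Weil bound (Lemma~\ref{endlem2}) can be applied. None of this follows from soft considerations (the quadric $h_1=0$ alone is easily irreducible, but one must show irreducibility survives intersecting with each higher-degree diagonal form), and your plan offers no substitute for it. A minor further remark: your aside about ruling out subspaces of dimension $\alpha>1$ in $\cH_2$ is dispensable for the theorem, since \eqref{necessary1} already yields exactly the claimed hull and the sufficiency argument is independent of it.
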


\begin{remark}\label{rem1}
As mentioned before, this statement  has only been known in~\cite{KS11} for $d=3$ and $k=2$.
Notice from Theorem~\ref{main1} that  the optimal averaging result for the homogeneous variety $\cH_2$ is exactly the same as that in~\eqref{Carbery}  for the general parabolic variety $\cV_2$ defined in~\eqref{CarberyV}.
\end{remark}

Let $\mathbb A^d$ be the affine $d$-space $\overline{\mathbb F}_q^d$, where $\overline{\mathbb F}_q$ denotes the algebraic closure of the finite filed $\mathbb F_q$ with $q$ elements.

For each $k=2,3,\ldots, d-1$, let us  consider the algebraic variety
$$\overline{\cH}_k=\{x\in \mathbb A^d: h_1(x)=h_2(x)=\ldots=h_{d-k}(x)=0\},$$
where  $h_j$, $j=1, \ldots, d-k$, are
 the homogeneous polynomials defined as in~\eqref{defH}.
One interesting point is that the smoothness of $\overline{\cH}_k$ depends on the dimension $d$ of $\mathbb A^d$. Indeed,  we   see from Proposition~\ref{Pro3} below that  the variety $\overline{\cH}_k$ is smooth away from the origin if and only if $d-k=1,2,3$.   In the case when $\overline{\cH}_k$ for $k\geq 3$ is smooth away from the origin,  we are able to obtain certain $L^p-L^r$ averaging estimates on $\cH_k$ (see Figure~\ref{fig}).

Next, we  state our averaging results over $\cH_k$ for $k\geq 3$.

\begin{theorem}\label{main2} If  $\max\{d-3, 3\} \le k \le d-1$, then, 
assuming that the characteristic of $\mathbb F_q$ is sufficiently large, 
$$\(\frac{1}{p}, \frac{1}{r}\) \in \hull{P_{0,0},P_{0,1},P_{1,1}, \left(\frac{d-1}{2d-k-1}, \frac{d-k}{2d-k-1}\right)} \Longrightarrow  \Ppr.
$$
\end{theorem}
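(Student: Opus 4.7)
The plan is to reduce the claimed inclusion to a single endpoint estimate at the critical vertex $\left(\frac{d-1}{2d-k-1},\frac{d-k}{2d-k-1}\right)$ and then interpolate with the three elementary estimates at $P_{0,0}$, $P_{0,1}$, $P_{1,1}$, which follow from Young's inequality because $d\sigma_k$ is a probability measure. Since the critical exponents satisfy $1/p+1/r=1$, the critical estimate lies on the duality line and is attacked naturally by a Tomas--Stein style argument.

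Concretely, I would decompose $d\sigma_k=d\mu+(d\sigma_k-d\mu)$, where $d\mu$ is the normalized counting measure on $\mathbb{F}_q^d$. Since $f\ast d\mu$ is the constant function $\int f\,dx$, it trivially satisfies the desired $L^p\to L^r$ bound throughout the whole region. For the corrected kernel I want two endpoint bounds: the crude one $\|f\ast(d\sigma_k-d\mu)\|_{L^\infty}\lesssim q^{d-k}\|f\|_{L^1}$, coming from $\|d\sigma_k\|_{L^\infty}\asymp q^{d-k}$, and an $L^2\to L^2$ bound obtained from Plancherel together with the uniform Fourier decay
$$
|\widehat{d\sigma_k}(\xi)|\lesssim q^{-(k-1)/2}\qquad\text{for every }\xi\ne 0.
$$
A Riesz--Thorin interpolation at $\theta=(k-1)/(2d-k-1)$ balances these two operator norms and produces exactly the critical $L^p\to L^{p'}$ estimate at the vertex $\left(\frac{d-1}{2d-k-1},\frac{d-k}{2d-k-1}\right)$; indeed the balance equation $(d-k)\theta=(k-1)(1-\theta)/2$ forces $1/p=(1+\theta)/2=(d-1)/(2d-k-1)$.

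The main obstacle is establishing the uniform Fourier decay above. I would exploit the cone structure of $\cH_k$ to rewrite, for $\xi\ne 0$,
$$
\sum_{x\in\cH_k}\chi(-\xi\cdot x)=1-|\mathbb{P}\cH_k|+q\,N_0(\xi),
$$
where $N_0(\xi)$ counts the $\mathbb{F}_q$-points of the hyperplane section $\mathbb{P}\cH_k\cap\{\xi\cdot y=0\}$. The hypothesis $d-k\in\{1,2,3\}$ is exactly the range in which Proposition~\ref{Pro3} guarantees that $\overline{\cH}_k$ is smooth away from the origin, so that $\mathbb{P}\overline{\cH}_k$ is a smooth complete intersection of dimension $k-1$; Deligne's theorem then controls $|\mathbb{P}\cH_k|$ with error $O(q^{(k-1)/2})$, and its leading terms cancel exactly against the expected main term of $qN_0(\xi)$. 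What remains is to bound the hyperplane-section error by $O(q^{(k-1)/2})$ uniformly in $\xi$. For generic $\xi$ this follows from Deligne applied via Bertini to the then-smooth section, but for exceptional $\xi$ whose orthogonal complement contains a linear subspace of $\cH_k$ (of dimension as large as $(k+1)/2$ when $k$ is odd and $-1$ is a square) the section becomes highly degenerate, and one must stratify it into its linear part, whose point count is computed exactly, and a residual subvariety of strictly smaller dimension to which Deligne can again be applied. Making this case analysis uniform in $\xi$ rather than only generic is the technical heart of the proof; everything else reduces to the interpolation sketched above.
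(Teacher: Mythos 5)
Your interpolation framework is exactly the paper's: the decomposition $d\sigma_k = d\mu + (d\sigma_k - d\mu)$ is the paper's $d\sigma_H = 1 + \widehat{K}$ in Lemma~\ref{formula}, your two endpoint bounds $L^1\to L^\infty$ with norm $q^{d-k}$ and $L^2\to L^2$ with norm $q^{-(k-1)/2}$ are the same, and your balance computation correctly recovers the critical vertex $\left(\frac{d-1}{2d-k-1},\frac{d-k}{2d-k-1}\right)$. So everything hinges, as you say, on the uniform Fourier decay $|\widehat{d\sigma_k}(\xi)|\lesssim q^{-(k-1)/2}$ for all $\xi\ne 0$ --- and that is where your argument has a genuine gap. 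You attempt to prove this decay from scratch via the cone identity, Deligne for $|\mathbb{P}\cH_k|$, and a stratification of the hyperplane sections, but you explicitly leave the uniform-in-$\xi$ treatment of the degenerate sections unproved (``the technical heart of the proof''). That step is not a technicality: it is precisely the content of the Shparlinski--Skorobogatov theorem (Lemma~\ref{Igor}, quoted from [SS90]), whose proof rests on showing that for a smooth complete intersection cone not contained in a hyperplane, every hyperplane section $\overline{\cH}_k\cap\overline{\Pi}(m)$ has at most isolated singularities, which is what yields the loss of only $q^{1/2}$ over the smooth count uniformly in $m$. Your proposed ``split off the linear part and apply Deligne to the rest'' does not obviously work: the residual piece need not be smooth or irreducible, and without controlling its singular locus you cannot invoke Deligne at the $O(q^{(k-1)/2})$ level. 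Without either completing this step or citing [SS90], the critical $L^2$ bound --- and hence the theorem --- is unproven.

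A second, smaller issue: you attribute the complete-intersection structure of $\mathbb{P}\overline{\cH}_k$ to Proposition~\ref{Pro3}, but that proposition only gives smoothness away from the origin (and only for $d-k\le 3$, which is why the hypothesis $k\ge\max\{d-3,3\}$ appears). The facts that $\overline{\cH}_k$ is absolutely irreducible of dimension $k$ and not contained in any hyperplane are Propositions~\ref{Pro2} and~\ref{Pro1}, and they are needed both for $|\cH_k|=(1+o(1))q^k$ (your $L^1\to L^\infty$ bound) and for the hypotheses of [SS90]. In the paper these propositions carry real content --- the irreducibility proof runs through a point count using the multiplicative character sum estimates of Section~\ref{newsec2} and the Lang--Weil type bound of Lemma~\ref{lem:LW} --- so they cannot be waved through. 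The shortest repair of your argument is: prove (or invoke) Propositions~\ref{Pro1}--\ref{Pro3}, cite Lemma~\ref{Igor} for the exponential sum bound, and then your interpolation closes the proof exactly as in Lemma~\ref{lem:HomVar}~{\bf(ii)}.
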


\begin{remark} The result of Theorem~\ref{main2} is far from the conjectured averaging result, but in general it gives a sharp $L^p-L^r$ estimate for 
$p=(2d-k-1)/(d-1)$.
\end{remark}

\begin{figure}[H]
\centering
\includegraphics[width=0.8\textwidth]{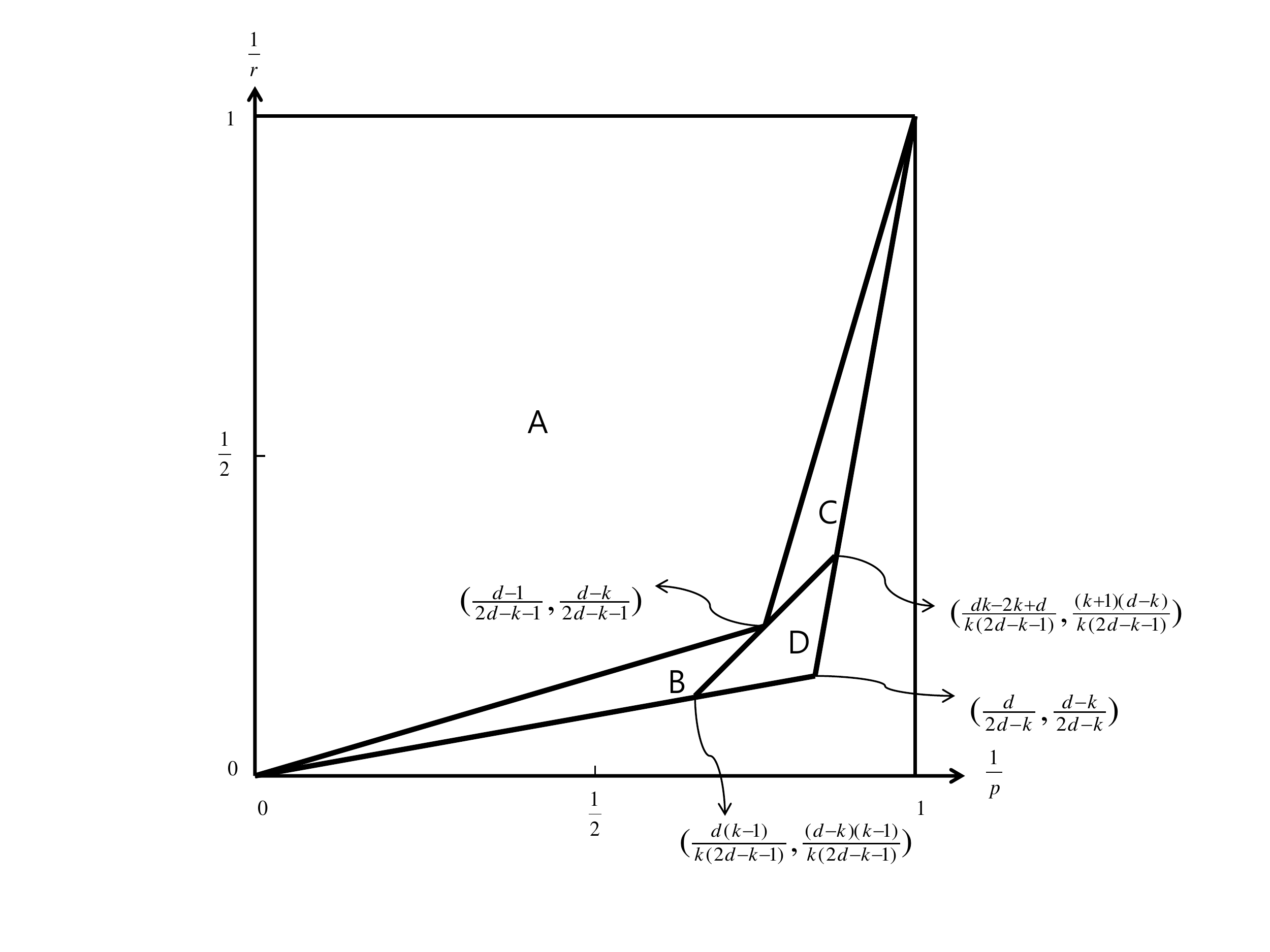}
\caption{When $k\geq 2$ is even,  $A\cup B\cup C\cup D$ is 
region 
of $(1/p, 1/r)$ of Conjecture~\ref{conj}.
Theorem~\ref{main1} covers the conjectured region for $k=2$.
Furthermore, $A\cup B\cup C$ indicates the conjectured region 
in the case when $k\geq 3$ is odd and $-1\in \mathbb F_q$ is a square number.
The region $A$ is corresponding to the result of Theorem~\ref{main2}.}
\label{fig}
\end{figure}

Our work has been mainly motivated by the exponential sum estimates on abstractly given homogeneous varieties due to authors in~\cite{SS90}.
To prove our main results, we  first derive  a useful result about averaging   on general homogeneous varieties with abstract algebraic structures.
Then our main results  follow by applying it to our variety $\cH_k$. 
To do this, we  make the following three key observations on $\cH_k \subseteq\mathbb F_q^d$  for $k=2,3,\ldots, d-1$.

\begin{proposition}\label{Pro1} Suppose that the characteristic of $\mathbb F_q$ is sufficiently large. Then, for every $k=2,\ldots, d-1$, the algebraic variety 
$\overline{\cH}_k\subseteq\mathbb A^d$ is not contained in any hyperplane in $\mathbb A^d$.
\end{proposition}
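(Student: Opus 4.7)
The plan is to show directly that no nonzero affine form vanishes on $\overline{\cH}_k$. Because $0\in \overline{\cH}_k$ (each $h_j$ is homogeneous of positive degree), any hyperplane containing $\overline{\cH}_k$ must pass through the origin, so it suffices to rule out the existence of a nonzero linear form $\ell(x)=\sum_{i=1}^{d} a_i x_i$ vanishing identically on $\overline{\cH}_k$. The plan is to assume such $\ell$ exists and deduce that every coefficient $a_i$ must be zero, a contradiction.

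The key step is to exhibit a family of points in $\overline{\cH}_k$ rich enough to separate all the coefficients. For each fixed $i\in\{1,\ldots,k\}$, the construction is
$$p(\zeta_1, \ldots, \zeta_{d-k}) \;=\; e_i + \sum_{j=1}^{d-k} \zeta_j\, e_{k+j},$$
where $e_1,\ldots, e_d$ is the standard basis of $\mathbb A^d$ and each $\zeta_j\in \overline{\mathbb F}_q$ is a $(j+1)$-th root of unity. A direct substitution into the defining equations yields $h_j(p)=1-\zeta_j^{j+1}=0$, so every such $p$ lies in $\overline{\cH}_k$. Since the characteristic of $\mathbb F_q$ is sufficiently large (say larger than $d$), each polynomial $X^{j+1}-1$ for $j=1,\ldots,d-k$ is separable and has at least two distinct roots in $\overline{\mathbb F}_q$. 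Freezing all $\zeta_m$ with $m\neq j$ at $1$ and letting $\zeta_j$ range over two such distinct roots, the identity $\ell(p)=0$ together with subtraction forces $a_{k+j}=0$; iterating over $j=1,\ldots,d-k$ kills $a_{k+1},\ldots, a_d$. The remaining evaluation $\ell(p(1,\ldots,1))=a_i$ then forces $a_i=0$, and running the same construction with $i=1,\ldots,k$ also eliminates $a_1,\ldots, a_k$.

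The main obstacle, such as it is, is purely arithmetic: one needs $X^{j+1}-1$ to have multiple distinct roots in $\overline{\mathbb F}_q$ for every $j$ in the relevant range, which is exactly what the "sufficiently large characteristic" hypothesis secures. Beyond that, the argument is a direct consequence of the diagonal shape of the defining polynomials $h_j$ and requires no information about the size of $\overline{\cH}_k$ or its singular locus; in particular, it works uniformly for every $k\in\{2,\ldots,d-1\}$.
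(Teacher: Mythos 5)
Your proposal is correct and uses essentially the same construction as the paper: points of the form $e_i+\sum_{j}\zeta_j e_{k+j}$ with $\zeta_j^{j+1}=1$ lying in $\overline{\cH}_k$ (the paper's $d$ linearly independent points $P_1,\dots,P_d$ are exactly such points, with $\zeta_j$ a nontrivial $(j+1)$-th root of unity supplied by the large-characteristic hypothesis). The only difference is bookkeeping: you eliminate the coefficients of a putative linear form one at a time, while the paper verifies that the $d\times d$ matrix of chosen points has full rank via Gaussian elimination — these are equivalent ways of showing the points span $\mathbb A^d$.
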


\begin{proposition}\label{Pro2} Suppose that the characteristic of $\mathbb F_q$ is sufficiently large. 
Then for every $k=2,\ldots, d-1$,   
the algebraic variety $\overline{\cH}_k\subseteq\mathbb A^d$ is  
absolutely irreducible and  $\dim \overline{\cH}_k =k$. 
 \end{proposition}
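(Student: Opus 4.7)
The plan is to establish the dimension and the absolute irreducibility by independent arguments.

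\textbf{Dimension.} I would consider the coordinate projection $\pi\colon \overline{\cH}_k \to \mathbb{A}^k$ onto the first $k$ factors. Each equation $h_j=x_{k+j}^{j+1}-p_{j+1}$, where $p_n:=x_1^n+\cdots+x_k^n$, forces $x_{k+j}$ to take between $1$ and $j+1$ values in $\overline{\mathbb{F}}_q$ for each fixed $(x_1,\ldots,x_k)$. Hence $\pi$ is finite and surjective, and $\dim\overline{\cH}_k = \dim\mathbb{A}^k = k$.

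\textbf{Reduction to a Capelli-type question.} For absolute irreducibility I would argue by induction on $m$ that the intermediate variety $W_m:=V(h_1,\ldots,h_m)\subseteq\mathbb{A}^d$ is absolutely irreducible. Since $h_m$ introduces the fresh variable $x_{k+m}$, the coordinate ring of $W_m$ is $\overline{\mathbb{F}}_q[W_{m-1}][x_{k+m}]/(x_{k+m}^{m+1}-p_{m+1})$; the inductive hypothesis makes $\overline{\mathbb{F}}_q[W_{m-1}]$ a domain, so Gauss's lemma reduces irreducibility to that of $X^{m+1}-p_{m+1}$ over the fraction field $L_{m-1}$ of $\overline{\mathbb{F}}_q[W_{m-1}]$. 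Capelli's theorem, valid in sufficiently large characteristic, further reduces the task to showing $p_{m+1}$ is not an $\ell$-th power in $L_{m-1}$ for each prime $\ell\mid m+1$; the side condition when $4\mid m+1$ is automatic over $\overline{\mathbb{F}}_q$, since $-1/4$ is already a fourth power there.

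\textbf{The main obstacle: non-power via ramification.} The hardest step will be showing $p_{m+1}$ is not an $\ell$-th power in $L_{m-1}=L(p_2^{1/2},\ldots,p_m^{1/m})$, where $L:=\overline{\mathbb{F}}_q(x_1,\ldots,x_k)$. I would first verify that each $p_n$ is squarefree in $A:=\overline{\mathbb{F}}_q[x_1,\ldots,x_k]$: the partials $nx_i^{n-1}$ vanish simultaneously only at the origin in characteristic coprime to $n$, so the singular locus of $\{p_n=0\}$ is zero-dimensional, while a repeated factor would force a singular locus of dimension $k-1\geq 1$. I would then exhibit an irreducible $f\in A$ with $f\mid p_{m+1}$ and $f\nmid p_n$ for $2\leq n\leq m$. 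Because $L_{m-1}/L$ is obtained by adjoining radicals of polynomials that are units at the divisorial place $v_f$, standard Kummer theory makes the extension unramified at $v_f$, so every extension $w$ of $v_f$ to $L_{m-1}$ satisfies $w(p_{m+1})=v_f(p_{m+1})=1$. This is not divisible by $\ell\geq 2$, yielding the contradiction.

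\textbf{Producing $f$.} For $k\geq 3$, the same partials computation shows $\{p_n=0\}\subset\mathbb{P}^{k-1}$ is a smooth projective hypersurface, hence connected and irreducible (since $k-1\geq 2$), so each $p_n$ is an irreducible polynomial in $A$; distinct $p_n$ are coprime by degree, and one may take $f:=p_{m+1}$. For $k=2$, the factorization $p_n=\prod_{\zeta^n=-1}(x_1-\zeta x_2)$ contains, for any primitive $2(m+1)$-th root of unity $\zeta$, the factor $f:=x_1-\zeta x_2$; since $\zeta^{m+1}=-1$ this $f$ divides $p_{m+1}$, while $\zeta^n\neq -1$ for $1\leq n\leq m$ (as $\zeta$ has order exactly $2(m+1)$) shows $f$ divides none of $p_2,\ldots,p_m$.
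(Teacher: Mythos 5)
Your argument is correct, but it proceeds quite differently from the paper. For the dimension, the paper intersects the hypersurfaces $\overline{\cH}_k^j$ one at a time and invokes the Krull-type statement that a nontrivial hypersurface section drops the dimension by exactly one (their Lemmas~\ref{claim1} and~\ref{dimlem}), whereas your finite surjective projection onto $\mathbb{A}^k$ gets the same conclusion in one step and is, if anything, cleaner. For absolute irreducibility the two routes diverge completely: the paper first shows the quadric $\overline{\cH}_k^1$ is irreducible by hand, then runs an induction in which irreducibility of $\bigcap_{j\le n+1}\overline{\cH}_k^j$ is deduced from a \emph{point count} over $\mathbb F_q$ --- via the Chatzidakis--van den Dries--Macintyre form of Lang--Weil, the count reduces to multiplicative character sums $\sum_t\prod_j\chi_j^{i_j}(t^{j+1}+A_j(\overline x))$, which are handled by the Weil bound together with the Flatters--Ward polynomial Zsigmondy theorem to guarantee a surviving nontrivial character. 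You instead stay entirely over $\overline{\mathbb F}_q$, present the coordinate ring as a tower of radical extensions $R_m=R_{m-1}[X]/(X^{m+1}-p_{m+1})$ (the reduction to irreducibility of $X^{m+1}-p_{m+1}$ over $\mathrm{Frac}(R_{m-1})$ is sound because the polynomial is monic, so the quotient injects into $L_{m-1}[X]/(X^{m+1}-p_{m+1})$), and then combine Capelli with ramification at a divisorial valuation $v_f$. It is worth noting that your choice of $f$ plays exactly the role of the ``primitive prime divisor'' that Flatters--Ward supplies in the paper's Lemma~\ref{endlem15}; because you work over the algebraic closure, a primitive $2(m+1)$-th root of unity (for $k=2$) or the irreducibility of the smooth Fermat hypersurface (for $k\ge 3$) suffices, which is elementary. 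The trade-off is that the paper's point-counting detour simultaneously yields the cardinality estimate $|\cH_k|=(1+o(1))q^k$ used later, while your argument delivers geometric irreducibility directly and would then recover the count from Lang--Weil afterwards; both are legitimate, and yours is arguably the more structural proof.
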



That is, Propositions~\ref{Pro1} and~\ref{Pro2} assert that $\cH_k$ is a complete intersection.

\begin{proposition} \label{Pro3} 
Suppose that the characteristic of $\mathbb F_q$ is sufficiently large. 
Then for every $k=2,\ldots, d-1$,   
the algebraic variety
$\overline{\cH}_k\subseteq\mathbb A^d$ 
is smooth away from the origin if and only if $d-k=1,2,3$.
\end{proposition}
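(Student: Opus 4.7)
The plan is to invoke the Jacobian criterion for complete intersections. By Proposition~\ref{Pro2}, $\overline{\cH}_k$ is absolutely irreducible of dimension $k$, hence a complete intersection of codimension $d-k$; smoothness at $x \in \overline{\cH}_k$ is therefore equivalent to the Jacobian matrix $J(x) = (\partial h_j/\partial x_i)_{1 \le j \le d-k,\,1 \le i \le d}$ attaining full row rank $d-k$. Under the large-characteristic hypothesis the entries read $\partial h_j/\partial x_i = (j+1)x_i^j$ for $1 \le i \le k$, $\partial h_j/\partial x_{k+j} = -(j+1)x_{k+j}^j$, and vanish on the remaining columns $k+l$ with $l \neq j$.

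For the ``only if'' direction ($d-k \ge 4$ forces a singular point off the origin) I would produce an explicit point $x^* \in \overline{\cH}_k \setminus \{0\}$. Take $x_1^* = 1$, $x_2^* = -1$, $x_i^* = 0$ for $3 \le i \le k$, and let $x_{k+j}^*$ be any solution of $t^{j+1} = 1 + (-1)^{j+1}$, so $x_{k+j}^* = 0$ for even $j$ and $x_{k+j}^* \neq 0$ for odd $j$. At $x^*$, every even $j \in \{1,\ldots,d-k\}$ contributes a Jacobian row of the form $(j+1)(1,1,0,\ldots,0)$, because columns $3,\ldots,k$ are zero and the column-$(k+j)$ entry $-(j+1)(x_{k+j}^*)^j$ vanishes. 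Since $d-k \ge 4$ contains the two even indices $j=2$ and $j=4$, the two corresponding rows are proportional; hence $\operatorname{rank} J(x^*) < d-k$ and $x^*\neq 0$ is singular.

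For the ``if'' direction ($d-k \in \{1,2,3\}$ forces smoothness away from the origin) I would suppose $x \in \overline{\cH}_k$ is singular and derive $x = 0$. From a nontrivial relation $\sum_j \lambda_j \operatorname{row}_j J(x) = 0$ with $S = \{j : \lambda_j \neq 0\}$, the last $d-k$ columns give $x_{k+j}=0$ for $j \in S$, while the first $k$ columns show that every $x_i$ with $i \le k$ is a root of $Q(t) = \sum_{j \in S}(j+1)\lambda_j t^j = t^{\min S}\widetilde Q(t)$, with $\deg \widetilde Q \le d-k-1$ and $\widetilde Q(0) \neq 0$. Collecting the distinct nonzero values among $x_1,\ldots,x_k$ as $a_1,\ldots,a_m$ (so $m \le d-k-1$) with multiplicities $n_l \ge 1$, the relations $h_j(x)=0$ for $j \in S$ become the linear system $\sum_l n_l a_l^{j+1} = 0$, $j \in S$. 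Once I show all $n_l$ vanish, then $x_i = 0$ for $i \le k$, and the equations $h_j = 0$ for $j \notin S$ force $x_{k+j} = 0$ as well, giving $x = 0$.

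The remaining work is a short case analysis on $|S| \in \{1,2,3\}$ in which I would identify an invertible $m \times m$ generalized Vandermonde submatrix of $(a_l^{j+1})_{j \in S,\, l \le m}$, whose determinant factors as $\prod_l a_l^{c} \cdot \prod_{l<l'}(a_{l'}-a_l) \neq 0$, forcing $n_l \equiv 0 \pmod{\operatorname{char}\F_q}$ and hence $n_l = 0$ as integers since $n_l \le k < \operatorname{char}\F_q$. The one genuinely delicate subcase is $d-k=3$, $S=\{1,3\}$: here $\widetilde Q$ is even with roots $\pm \alpha$, both equations collapse to $(n_1+n_2)\alpha^2 = 0$, and the generalized Vandermonde degenerates; nevertheless, the integer bound $n_1+n_2 \le k < \operatorname{char}\F_q$ still forces $n_1+n_2 = 0$, contradicting $n_1,n_2 \ge 1$. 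This degenerate subcase is the place I expect to need the most care.
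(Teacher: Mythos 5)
Your proposal is correct, and while it rests on the same foundation as the paper (the affine Jacobian criterion applied to the $(d-k)\times d$ matrix $[(j+1)x_i^j \,\|\, {-(j+1)x_{k+j}^j}]$, plus an explicit singular point for $d-k\ge 4$), the harder half is organized quite differently. For the singular-point construction your single point $(1,-1,0,\ldots,0,x_{k+1},\ldots,x_d)$ works uniformly, whereas the paper uses $(1,-1,\ldots,1,-1,\ldots)$ and must split into the cases $k$ even and $k$ odd; both yield two identical (or proportional) rows coming from the even indices $j=2,4$. For the smoothness claim when $d-k\le 3$, the paper normalizes by a nonzero coordinate $x_1$ and hunts for explicit nonvanishing $2\times 2$ or $3\times 3$ minors through a case analysis on whether the ratios $u_j$ equal $0$ or $1$; you instead take a hypothetical row relation $\sum_j\lambda_j\,\mathrm{row}_j=0$, read off $x_{k+j}=0$ for $j\in S$ from the diagonal block, observe that every $x_i$ ($i\le k$) is a root of $Q(t)=\sum_{j\in S}(j+1)\lambda_j t^j=t^{\min S}\widetilde Q(t)$ with $\deg\widetilde Q\le d-k-1\le 2$, and then play the defining equations $\sum_l n_l a_l^{j+1}=0$ against the fact that the multiplicities $n_l$ are integers in $[1,k]$, hence nonzero modulo a large characteristic. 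Your route is more conceptual and explains structurally why the threshold is $d-k=3$ (at most $d-k-1$ distinct nonzero coordinate values can occur at a singular point), and you correctly isolate and dispose of the one degenerate subcase $S=\{1,3\}$ where the generalized Vandermonde argument fails but the positivity of $n_1+n_2$ saves the day; the paper's route is more computational but entirely explicit. Both are complete arguments, and each of your steps (in particular the reduction "singular $\Rightarrow x=0$" and the final use of $h_j=0$ for $j\notin S$) checks out.
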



Furthermore, the smoothness condition on $\cH_2$ is not necessary in completing the proof of Theorem~\ref{main1}.
Therefore,  the conclusion of Theorem~\ref{main1} holds true for any $d$ and $k=2$, and  Conjecture~\ref{conj} for $k=2$ is established.
On the contrary, we  use the smooth condition on $\cH_k$ for $k\geq 3$  in proving Theorem~\ref{main2}.
Thus,  the condition that  $d-k=1,2,3$ is imposed to the statement of Theorem~\ref{main2}.
However, Conjecture~\ref{conj} proposes that  such a smooth condition on $\cH_k$ may  not be important in determining  $L^p-L^r$ averaging estimates over $\cH_k$.
We hope that experts in both harmonic analysis and algebraic geometry may be able to shed insight on the conjecture.

\subsection{Overview of this paper} 

In the remaining parts of this paper, we concentrate on proving Theorem~\ref{main1} and Theorem~\ref{main2} which are our main results.
Instead of proving directly main theorems,  we  derive them by means of working on more general homogeneous varieties with specific geometric structures.

To this end, in Section~\ref{newsec2} we collect facts about the multiplicative character sums and the existence of a primitive prime divisor of a family of
shifted monomials. In particular, we make use of a polynomial analogue
of the Zsigmondy theorem which is due to Flatters and Ward~\cite{FW11}. 

Section~\ref{sec2} is devoted to setting up notation and basic concepts  essential in defining  abstract varieties in algebraic geometry.

In Section~\ref{sec3},  we  derive a result  for averaging  problems over  general homogeneous varieties, where  we adapt the standard analysis technique in~\cite{CSW08} 
together with the results on exponential sums in~\cite{SS90},  see Lemma~\ref{lem:HomVar} below. 
In fact, this result generalizes our main results related to $\cH_k$. 

In Section~\ref{sec4}, we show that Lemma~\ref{lem:HomVar} applies to the variety $\cH_k$ and complete the proofs of our main results, 
that is, Theorems~\ref{main1} and~\ref{main2}.

We note that our main tool are bounds of exponential sums along algebraic varieties, which 
we interpret as results about the decay of Fourier coefficients. 

\section{Multiplicative character sums and  roots of some polynomials}
\label{newsec2}

\subsection{Root of shifted monomials}
We need the following simple observation, which immediately follows 
from the Taylor formula (which applies if the characteristic is large enough). 

\begin{lemma} \label{endlem1}
For any fixed integer $s\ge 1$, if the characteristic of $\mathbb F_q$ is sufficiently large 
then for any $a \in \mathbb F_q^*$, the polynomial $t^s+a \in \mathbb F_q[t]$ 
has no multiple roots.
\end{lemma}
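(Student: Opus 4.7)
The plan is to invoke the standard derivative criterion for separability: a polynomial $f(t)\in \mathbb F_q[t]$ has no multiple roots in the algebraic closure $\overline{\mathbb F}_q$ if and only if $\gcd(f, f') = 1$. So I would first compute $f'(t) = s t^{s-1}$ for $f(t) = t^s + a$. The point of the hypothesis that the characteristic $p$ of $\mathbb F_q$ is sufficiently large — here it suffices that $p > s$, or equivalently $p \nmid s$ — is precisely to guarantee that the coefficient $s$ is nonzero in $\mathbb F_q$, so that $f'(t)$ is a nonzero monomial whose only root (with multiplicity $s-1$) is $t = 0$.

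Next I would observe that $t = 0$ is not a root of $f(t) = t^s + a$, since $f(0) = a \in \mathbb F_q^*$. Therefore $f$ and $f'$ have no common root in $\overline{\mathbb F}_q$, forcing $\gcd(f, f') = 1$, and so $t^s + a$ is separable. Equivalently, as the paper's parenthetical hint suggests, one can view this through the Taylor expansion
\[
f(t_0 + h) = f(t_0) + f'(t_0)\, h + \binom{s}{2} t_0^{s-2} h^2 + \cdots :
\]
at any root $t_0$, since $t_0^s = -a \neq 0$ we have $t_0 \neq 0$, and since $s \neq 0$ in $\mathbb F_q$ the linear coefficient $f'(t_0) = s t_0^{s-1}$ is nonzero, so $t_0$ is a simple root.

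There is no substantive obstacle here; the lemma is essentially a restatement of the fact that $x^s - c$ is separable in any characteristic coprime to $s$. The only thing worth flagging is the quantitative meaning of ``sufficiently large'': in subsequent applications the exponent $s$ will depend on the fixed parameters $d, k$, so the implied lower bound on $\operatorname{char}\mathbb F_q$ is a constant depending only on those parameters, which is consistent with the standing convention declared earlier in the paper.
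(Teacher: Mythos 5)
Your proof is correct and matches the paper's intent: the paper gives no detailed argument, merely noting that the lemma ``immediately follows from the Taylor formula,'' which is exactly the derivative/separability criterion you spell out. Your observation that $p>s$ suffices (so that $f'(t)=st^{s-1}$ vanishes only at $t=0$, which is not a root of $t^s+a$ for $a\in\mathbb F_q^*$) is the intended argument.
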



\begin{lemma} \label{endlem15}
For any fixed integer $s\ge 1$, if the characteristic of $\mathbb F_q$ is sufficiently large 
then  the polynomial $t^s+1 \in \mathbb F_q[t]$ has at least one root which is
 not a root of the polynomials $t^j+1 \in \mathbb F_q[t]$, $j = 1, \ldots, s-1$. 
\end{lemma}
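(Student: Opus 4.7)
The plan is to exhibit a specific root of $t^s+1$ in $\overline{\mathbb{F}_q}$, namely a primitive $2s$-th root of unity, and to verify it cannot be a root of any $t^j+1$ with $1\le j\le s-1$. Morally, I want the root of maximal possible multiplicative order; the cyclotomic structure then makes the nonvanishing on the smaller shifted monomials automatic.

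First, I would use that since the characteristic $p$ of $\mathbb{F}_q$ is assumed sufficiently large (it suffices that $p>2s$), the integer $2s$ is coprime to $p$, so the polynomial $t^{2s}-1$ is separable over $\overline{\mathbb{F}_q}$ and the group $\mu_{2s}\subset\overline{\mathbb{F}_q}^{\,*}$ of $2s$-th roots of unity is cyclic of order $2s$. In particular, there exist $\varphi(2s)\ge 1$ primitive $2s$-th roots of unity. Fix any such element $\zeta\in\overline{\mathbb{F}_q}$. Then $\zeta^s$ has order exactly $2$, hence $\zeta^s=-1$, so $\zeta$ is a root of $t^s+1$.

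Next, suppose towards a contradiction that $\zeta^j=-1$ for some $1\le j\le s-1$. Squaring gives $\zeta^{2j}=1$, and since $\zeta$ has multiplicative order $2s$ this forces $2s\mid 2j$, i.e.\ $s\mid j$, which is impossible in the range $1\le j\le s-1$. Therefore $\zeta$ is a root of $t^s+1$ that is not a root of any of $t+1,\, t^2+1,\,\ldots,\, t^{s-1}+1$, as required.

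Most of the content is really an exercise in the arithmetic of cyclic groups; the only nontrivial input is the existence of a primitive $2s$-th root of unity in $\overline{\mathbb{F}_q}$, which is guaranteed by the large-characteristic hypothesis. So the main thing to watch is simply the convention that ``root'' means ``root in the algebraic closure'' (consistent with the way the preceding lemma about multiple roots is phrased), and no appeal to the deeper Flatters--Ward theorem cited elsewhere in this section is needed for this particular statement.
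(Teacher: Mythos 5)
Your proof is correct, but it takes a genuinely different (and more elementary) route than the paper. The paper invokes the polynomial Zsigmondy theorem of Flatters and Ward to produce an irreducible factor $q(t)\in\mathbb F_q[t]$ of $t^{2s}-1$ dividing none of $t^j-1$, $j=1,\ldots,2s-1$, and then shows $q(t)\mid t^s+1$ while $\gcd\bigl(q(t),\,t^j+1\bigr)=1$ for $j<s$. You instead work directly with a primitive $2s$-th root of unity $\zeta\in\overline{\mathbb F}_q$, whose existence for $p>2s$ is immediate from the separability of $t^{2s}-1$ and the cyclicity of $\mu_{2s}$; the order computation $\zeta^s=-1$ and the implication $\zeta^{2j}=1\Rightarrow s\mid j$ then finish the argument. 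The two proofs are essentially equivalent in content — the roots of the paper's primitive factor $q(t)$ are exactly the primitive $2s$-th roots of unity — but yours is self-contained and shows that the deep input from~\cite{FW11} is not actually needed for this special case (the family $t^n-1$ is just the cyclotomic situation). Your version also still supplies what is used downstream in Lemma~\ref{endlem3}: the minimal polynomial of $\zeta$ over $\mathbb F_q$ is an irreducible factor of $t^s+1$ coprime to every $t^j+1$ with $j<s$. Your reading of ``root'' as ``root in $\overline{\mathbb F}_q$'' is the intended one, consistent with Lemma~\ref{endlem1}.
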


 \begin{proof}  By a result of Flatters and Ward~\cite[Theorem~2.6]{FW11}, 
if the characteristic of $\mathbb F_q$ is large enough then
$t^{2s}-1$ has an irreducible factor $q(t) \in \mathbb F_q[t]$ that does not divide 
any of the polynomials $t^{j}-1 \in \mathbb F_q[t]$, $j = 1, \ldots, 2s-1$.
In particular, $q(t)$ is relatively prime with $t^s-1$ and thus is 
a divisor of 
$$
 t^{s}+1 = \frac{t^{2s}-1}{t^{s}-1}.
$$
Furthermore, $q(t)$ is relatively prime to 
$$
 t^{j}+1 = \frac{t^{2j}-1}{t^{j}-1}, \qquad j = 1, \ldots, s-1, 
$$
which concludes the proof. 
\end{proof}

\subsection{Multiplicative character sums  with shifted monomials}
We also need the following result due to Wan~\cite[Corollary~2.3]{Wa97}
that follows almost instantly from the Weil bound in the form given 
in~\cite[Theorem~11.23]{IwKow04}.

\begin{lemma}\label{endlem2} Let $g_1(t),\ldots, g_s(t)$ 
be $s$ monic pairwise prime polynomials in $\mathbb F_q[t]$.
Denote by $\chi_1,  \ldots, \chi_s$  nontrivial multiplicative characters 
of $\mathbb F_q$ with order $d_1, \ldots, d_s$, respectively. 
If for some $i=1,2, \ldots, s$, the polynomial $g_i(t)$ is not of the form $q(t)^{d_i}$ 
with  $q(t) \in \mathbb F_q[t]$, then we have
$$ \left| \sum_{t\in \mathbb F_q} \chi_1(g_1(t)) \cdots \chi_s(g_s(t)) \right| \lesssim q^{\frac{1}{2}}.$$
\end{lemma}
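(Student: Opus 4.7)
The plan is to reduce the $s$-character sum to a single-character sum over $\mathbb F_q$ and then invoke the Weil bound in the form stated in~\cite[Theorem~11.23]{IwKow04}. Set $d=\mathrm{lcm}(d_1,\ldots,d_s)$ and fix a multiplicative character $\chi$ of $\mathbb F_q^*$ of order exactly $d$ (which exists because $d\mid q-1$ once the characteristic is large enough). For each $i$, since $\chi_i$ has order $d_i\mid d$, there is an integer $e_i$ with $\chi_i=\chi^{e_i}$ and $d/\gcd(e_i,d)=d_i$. Then, pushing the product into the exponent,
\[
\sum_{t\in\mathbb F_q}\chi_1(g_1(t))\cdots\chi_s(g_s(t))=\sum_{t\in\mathbb F_q}\chi\bigl(f(t)\bigr),\qquad f(t)=\prod_{i=1}^{s}g_i(t)^{e_i},
\]
with the usual convention that terms where some $g_i(t)=0$ contribute $0$.

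Next I would apply the Weil bound to this single sum: if $f(t)\in\mathbb F_q[t]$ is not of the form $c\,h(t)^{d}$ for some $h\in\overline{\mathbb F}_q[t]$ and $c\in\mathbb F_q^*$, then the bound yields $\bigl|\sum_{t}\chi(f(t))\bigr|\le(\deg f-1)\sqrt q\lesssim q^{1/2}$, with an implied constant depending only on the degrees of the $g_i$ and the exponents $e_i$ (hence only on the fixed integers in the statement).

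The step that needs care is verifying the non-$d$-th-power hypothesis on $f$. Here is where pairwise coprimality of the $g_i$ pays off. Factor each $g_i$ into monic irreducibles in $\mathbb F_q[t]$; by pairwise coprimality, the irreducible factors appearing in different $g_i$'s are distinct, so the factorization of $f$ splits cleanly: every irreducible $\pi$ dividing $g_i$ occurs in $f$ with multiplicity $e_i\cdot v_\pi(g_i)$. If $f$ were a $d$-th power in $\overline{\mathbb F}_q[t]$, then $d\mid e_i\,v_\pi(g_i)$ for every such $\pi$, i.e.\ $d/\gcd(e_i,d)=d_i$ divides $v_\pi(g_i)$ for every irreducible factor $\pi$ of $g_i$; this would force $g_i$ to be a $d_i$-th power in $\mathbb F_q[t]$, contradicting the hypothesis at the index $i$ singled out in the lemma. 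Hence $f$ is not a $d$-th power and the Weil bound applies, giving the claim. The main obstacle in a fully rigorous write-up is just this bookkeeping with the exponents $e_i$; the analytic content is entirely carried by the cited form of Weil's theorem.
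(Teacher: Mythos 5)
Your reduction is exactly the route the paper gestures at: the paper gives no proof of this lemma, citing it as Wan's Corollary~2.3 and remarking that it ``follows almost instantly from the Weil bound in the form given in \cite[Theorem~11.23]{IwKow04}''; your argument --- combining the characters into a single $\chi$ of order $d=\mathrm{lcm}(d_1,\ldots,d_s)$, forming $f=\prod_i g_i^{e_i}$, and using pairwise coprimality together with separability of irreducibles over the perfect field $\mathbb F_q$ to show that $f$ is not of the form $c\,h^d$ --- is the standard derivation, and the key verification is correct. One detail needs repair: you state the bound as $(\deg f-1)\sqrt q$ and assert the constant depends ``only on the fixed integers in the statement,'' but $\deg f=\sum_i e_i\deg g_i$ involves the exponents $e_i$, which can be as large as $d-1$ with $d=\mathrm{lcm}(d_1,\ldots,d_s)$ dividing $q-1$; the lemma places no bound on the character orders, so as written your constant is not independent of $q$. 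The cited form of the Weil bound in fact gives $(m-1)\sqrt q$ with $m$ the number of \emph{distinct} roots of $f$ in $\overline{\mathbb F}_q$; since $f$ and $\prod_i g_i$ have the same distinct roots, $m\le\sum_i\deg g_i$, and the implied constant then depends only on the degrees of the $g_i$ and on $s$, as the paper's convention for $\lesssim$ requires.
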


 \begin{lemma} \label{endlem3}
For any fixed integer $s\ge 1$, if the characteristic of $\mathbb F_q$ is sufficiently large, 
then for any multiplicative characters $\chi_j$, $j=1,2, \ldots, s$, 
among which at least one is nontrivial, we have 
$$\left|\sum_{t\in \mathbb F_q} \prod_{j=1}^{s}
\chi_j (t^{j+1}+1)\right|\lesssim q^{\frac{1}{2}}.$$
\end{lemma}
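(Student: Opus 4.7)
The plan is to reduce to Lemma~\ref{endlem2} (Wan's character sum bound) after a preliminary refactorisation step. The obstacle to a direct application is that the polynomials $g_j(t) := t^{j+1}+1$ are in general not pairwise coprime; for instance $t^2+1$ and $t^6+1$ share roots in $\overline{\mathbb F}_q$. I therefore split each $g_j$ into its irreducible factors and regroup the characters accordingly.

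By Lemma~\ref{endlem1}, each $g_j$ is squarefree over $\mathbb F_q$ provided the characteristic is sufficiently large. Let $P_1,\ldots,P_m$ enumerate the distinct monic irreducible factors of $\prod_{j=1}^{s} g_j(t)$, so that
$$\prod_{j=1}^{s}\chi_j(g_j(t)) \;=\; \prod_{i=1}^{m} \eta_i(P_i(t)), \qquad \eta_i := \prod_{j:\,P_i\mid g_j}\chi_j,$$
where by construction the $P_i$ are monic and pairwise coprime. This is exactly the shape required by Lemma~\ref{endlem2}, provided at least one $\eta_i$ is nontrivial and the corresponding $P_i$ is not a perfect $d$-th power.

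The crucial step is to exhibit such an index. Let $j_0$ be the largest index with $\chi_{j_0}$ nontrivial, which exists by hypothesis. Invoking Lemma~\ref{endlem15} with parameter $j_0+1$ produces an irreducible factor $P_{i^{\ast}}$ of $t^{j_0+1}+1 = g_{j_0}$ that divides none of $g_1,\ldots,g_{j_0-1}$. Since $\chi_j$ is trivial for every $j>j_0$ by choice of $j_0$, the regrouped character is $\eta_{i^{\ast}}=\chi_{j_0}$, which is nontrivial of some order $d_{i^{\ast}}\ge 2$. Moreover, $P_{i^{\ast}}$, being irreducible of positive degree, cannot be written as $q(t)^{d_{i^{\ast}}}$ with $d_{i^{\ast}}\ge 2$.

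Finally, set $I:=\{i:\eta_i\text{ nontrivial}\}$, so that $i^{\ast}\in I$. Discarding the trivial $\eta_i$ from the product alters the sum by at most $\lesssim 1$, since the full and restricted products agree except at the $O_s(1)$ values of $t$ at which some discarded $P_i$ vanishes. Lemma~\ref{endlem2} then applies to $\sum_{t}\prod_{i\in I}\eta_i(P_i(t))$, whose characters are all nontrivial, whose polynomials are pairwise coprime, and one of whose polynomials (namely $P_{i^{\ast}}$) is not a $d_{i^{\ast}}$-th power. This yields the claimed bound $\lesssim q^{1/2}$. The main obstacle is precisely the non-coprimality of the $g_j$'s, and it is this that dictates the two-step strategy of refactoring into irreducibles and then using Lemma~\ref{endlem15} to guarantee that the surviving character product does not collapse to the trivial character.
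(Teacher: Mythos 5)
Your proof is correct and follows essentially the same route as the paper's: factor the shifted monomials into irreducibles (squarefree by Lemma~\ref{endlem1}), regroup the characters over the distinct irreducible factors, use Lemma~\ref{endlem15} at the largest nontrivial index to guarantee that at least one regrouped character is nontrivial on an irreducible (hence non-power) factor, and conclude with Lemma~\ref{endlem2}. Your treatment is in fact slightly more careful than the paper's at two points: the explicit choice of $j_0$ as the largest nontrivial index (ensuring the regrouped character there collapses to $\chi_{j_0}$), and the $O_s(1)$ accounting when discarding trivial regrouped characters.
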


 \begin{proof} 
After ignoring all trivial characters, it suffices to prove that for some
positive integer  $m \le s$,  we have
\begin{equation}\label{conquer} \left|\sum_{t\in \mathbb F_q} \widetilde{\chi}_1(t^{s_1+1} +1)\cdots
\widetilde{\chi}_m(t^{s_m+1} +1)\right|\lesssim q^{\frac{1}{2}},\end{equation}
where $1\leq s_1<s_2<\ldots<s_m\leq s$ and  $\tilde{\chi}_1,  \ldots,\tilde{\chi}_m$ denote  nontrivial multiplicative characters of $\mathbb F_q$.
Factoring the polynomials $t^{s_i+1} +1$,  $i=1, \ldots,m$, into irreducible factors
over $\F_q$  and using the multiplicativity, we see from 
Lemma ~\ref{endlem1} that 
$$
\sum_{t\in \mathbb F_q} \widetilde{\chi}_1(t^{s_1+1} +1)\cdots
\widetilde{\chi}_m(t^{s_m+1} +1)
= \sum_{t\in \mathbb F_q} {\eta}_1\(q_1(t)\) \cdots
{\eta}_u(q_u(t))
$$
for some multiplicative characters $\eta_i$ and monic pairwise prime 
polynomials $q_i$, $i=1,2,\ldots, u$. 

Furthermore, by Lemma~\ref{endlem15} we have $\eta_{i_0}= \widetilde{\chi}_{i_0}$
for at least one $i_0 \in \{1, \ldots, u\}$, and thus $\eta_{i_0}$ is a nontrivial 
character. Now using Lemma~\ref{endlem2}  we complete the proof.  
\end{proof}

\section{Algebraic properties of general homogeneous varieties}
\label{sec2}

\subsection{Preliminaries}
In this section, we review known facts on general varieties generated by a system of $s-$homogeneous polynomials in $\mathbb F_q[x_1,x_2, \ldots, x_d]$.
We begin by setting up notation. 

Let $2\leq s\leq d-1$ be an integer. 
Assume we are given $s-$homogeneous polynomials in $d$ variables over $\F_q$ 
of  degree  at least two each,  which we write as 
$$
f_j(x)\in \mathbb F_q[x_1,x_2,\ldots,x_d],  \quad 
\deg f_j \ge 2, \qquad j =1, \ldots, s, 
$$
where $x= (x_1,x_2,\ldots,x_d)$.
Now, define the closed algebraic set 
\begin{equation}
\label{eq:HA-bar}
\overline H_\A=\{x\in \mathbb A^d~:~f_1(x)=f_2(x)=\ldots=f_s(x)=0\}.
\end{equation}
Let $H_\A$ be the collection of points in $\overline H_\A$ with coordinates 
in $\mathbb F_q:$
\begin{equation}
\label{eq:HA} 
H_\A=\{x\in \mathbb F_q^d~:~f_1(x)=f_2(x)=\ldots=f_s(x)=0\}.
\end{equation}
 We also use the standard notation $\mathbb P^{d-1}$ for the $(d-1)$-dimensional projective space over $\overline{\mathbb F}_q$, which can be considered as the collection of all one dimensional subspaces of the vector space $\mathbb A^d$. For $P=[a_1:a_2:\ldots:a_d]\in \mathbb P^{d-1}$ and a polynomial $f\in \overline{\mathbb F}_q[x_1,\ldots,x_d]$,
recall that $f(P)=0$ means that $f(\lambda a_1, \ldots, \lambda a_d)=0$ for all $\lambda\neq 0$.
Like the algebraic subset $\overline H_\A$ of the affine space $\mathbb A^d$,  we define the projective algebraic set
$$
\overline H_\P=\{P\in \mathbb P^{d-1}~:~f_1(P)=f_2(P)=\ldots=f_s(P)=0\}.
$$
Let us recall an affine cone over a projective subset in $\mathbb P^{d-1}$.
Denote by $\pi: \mathbb A^d\setminus \{(0,\ldots,0)\} \to \mathbb P^{d-1}$ the projection map defined by
$$\pi(x_1, \ldots, x_d)=[x_1:\cdots:x_d].$$
Then the affine cone over $Y\subseteq\mathbb P^{d-1}$ is defined by
$$C(Y)=\pi^{-1}(Y) \cup \{(0,\ldots,0)\} \subseteq\mathbb A^d.$$
Notice that  $\overline H_\A$ is the affine cone over the 
projective variety $\overline H_\P$.

\begin{definition}\label{def1} We say that a homogeneous variety 
$H_\A\subseteq\mathbb F_q^d$ defined as in~\eqref{eq:HA} is a {\bf complete intersection\/}  if the following two conditions hold:
\begin{itemize}
\item $\overline H_\A\subseteq\mathbb A^d$ is an affine cone over a 
projective variety $\overline H_\P$ which is not contained in a hyperplane, 
\item  $\overline H_\A$ is an absolutely irreducible variety of dimension $d-s$ (or $\dim\overline H_\P=d-1-s$).
\end{itemize}
\end{definition}

\begin{definition}\label{def2}
We say that a homogeneous variety 
$H_\A\subseteq\mathbb F_q^d$ defined as in~\eqref{eq:HA} is   {\bf smooth\/} 
if $\overline H_\A$ is smooth away from the origin. 
\end{definition}

\subsection{Exponential sums and Fourier coefficients} 

It is observed in~\cite{SS90} that if $H_\A$ is a smooth homogeneous variety, which is a complete intersection, then $\overline H_\A \cap \overline{\Pi}(m)$ can have at most isolated singularities in $\mathbb P^{d-1}$ where 
$$
\overline{\Pi}(m)=\{x\in \mathbb A^d: m\cdot x=0\}$$  
for $m\neq(0,\ldots,0)$, where $m\cdot x$ denotes the inner products of the vectors $m$ and $x$.

 From this observation, they state and prove  the following exponential sum estimates on $H_\A$ (see~\cite[Theorem~1]{SS90}).
\begin{lemma}\label{Igor} 
Let  $H_\A \subseteq\mathbb F_q^d$  be defined as in~\eqref{eq:HA}.
Suppose that $H_\A$ is a {\bf smooth} homogeneous variety, which is a complete intersection,  and the characteristic of $\mathbb F_q$ is sufficiently large.
Then we have for all $m\in \mathbb F_q^d\setminus \{(0,\ldots,0)\}$ 
$$\left| \sum_{x\in H_\A} \psi(m\cdot x)\right|   \lesssim \left\{\begin{array}{ll} 
q^{(d-s+1)/2} \quad &\text{if}~~ d-s\geq 3,\\                                                                                                                                q\quad &\text{if}~~ d-s= 2, \end{array} \right.$$
where $\psi$ denotes a nontrivial additive character of $\mathbb F_q$. 
\end{lemma}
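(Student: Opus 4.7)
The plan is to reduce the exponential sum to a difference of point counts on two projective varieties by using the cone structure of $H_\A$, and then bound these counts via Deligne's purity theorem together with the geometric input that the projective hyperplane section has at worst isolated singularities. Since every $f_j$ is homogeneous, $H_\A$ is stable under scalar multiplication, so every nonzero $x\in H_\A$ corresponds to a unique point of $\overline H_\P(\F_q)$, with each such projective point having exactly $q-1$ affine preimages. Choosing a lift $\tilde y$ of each $y\in \overline H_\P(\F_q)$, I would split off the origin and rewrite
$$
\sum_{x\in H_\A}\psi(m\cdot x) \;=\; 1 \;+\; \sum_{y\in\overline H_\P(\F_q)}\;\sum_{\lambda\in\F_q^*}\psi\bigl(\lambda\,m\cdot\tilde y\bigr).
$$
Evaluating the inner geometric sum (which equals $q-1$ when $m\cdot\tilde y=0$ and $-1$ otherwise) yields the exact identity
$$
\sum_{x\in H_\A}\psi(m\cdot x) \;=\; 1 \;+\; q\,|Y_m(\F_q)| \;-\; |\overline H_\P(\F_q)|,\qquad Y_m:=\overline H_\P\cap\overline\Pi(m).
$$

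Next I would apply Deligne's theorem to the two point counts. Because $\overline H_\A$ is a smooth complete intersection away from the origin, its projectivization $\overline H_\P\subset\mathbb P^{d-1}$ is a smooth projective complete intersection of dimension $n:=d-1-s$. The Lefschetz hyperplane theorem forces its $\ell$-adic cohomology to agree with $\mathbb P^n$ outside the middle degree, and Deligne's purity bound on the primitive middle cohomology gives
$$
|\overline H_\P(\F_q)| \;=\; |\mathbb P^n(\F_q)| \;+\; O\!\bigl(q^{n/2}\bigr).
$$
For $Y_m$ I would invoke the key geometric input (quoted from \cite{SS90}): although $Y_m$ may fail to be smooth, the smoothness of $\overline H_\A$ away from the origin forces its singularities in $\mathbb P^{d-1}$ to be isolated. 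A hyperplane section of a complete intersection is again a complete intersection, so $Y_m$ is a complete intersection of dimension $n-1$ with only isolated singularities. For such $Y_m$ one still has
$$
|Y_m(\F_q)| \;=\; |\mathbb P^{n-1}(\F_q)| \;+\; O\!\bigl(q^{n/2}\bigr) \qquad (n\ge 2),
$$
the enlarged exponent $n/2$ (rather than the smooth-case $(n-1)/2$) reflecting the possibility of Frobenius eigenvalues of weight $n$ contributed by the singular points. When $n=1$ (the case $d-s=2$), $Y_m$ is $0$-dimensional, so trivially $|Y_m(\F_q)|=O(1)$ by a B\'ezout bound in terms of $\deg f_j$.

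Combining these, the main-term cancellation $q\,|\mathbb P^{n-1}(\F_q)|-|\mathbb P^n(\F_q)|=-1$ annihilates the isolated $1$, leaving
$$
\Bigl|\sum_{x\in H_\A}\psi(m\cdot x)\Bigr| \;\lesssim\; q\cdot q^{n/2}+q^{n/2} \;\lesssim\; q^{(n+2)/2} \;=\; q^{(d-s+1)/2}
$$
when $d-s\ge 3$, and directly $q\cdot O(1)+O(q)=O(q)$ when $d-s=2$ (using Hasse--Weil for the smooth curve $\overline H_\P$). The main obstacle is the hyperplane-section bound: passing from the smooth Weil bound to a uniform (in $m\neq 0$) bound in the presence of isolated singularities requires either Deligne's Weil II together with control on the vanishing cycles at singular points of $Y_m$, or a direct bound on the global Milnor number of $Y_m$ in terms of the degrees $\deg f_j$. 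Establishing that this Milnor number stays bounded independently of $m$—so that the error term does not depend on the hyperplane—is the technical heart of the argument and is exactly what the geometric setup of a smooth complete-intersection cone is designed to guarantee.
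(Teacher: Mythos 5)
Your reconstruction is correct and follows essentially the same route as the source: the paper does not prove this lemma itself but quotes it from \cite[Theorem~1]{SS90}, whose argument is exactly your reduction of the sum to $1+q\,|Y_m(\F_q)|-|\overline H_\P(\F_q)|$ via the cone structure, followed by Deligne-type point counts for the smooth projective complete intersection $\overline H_\P$ and for its hyperplane sections, which have at most isolated singularities. The uniform-in-$m$ estimate $|Y_m(\F_q)|-|\mathbb P^{n-1}(\F_q)|=O(q^{n/2})$ that you flag as the technical heart is precisely the point-counting theorem for complete intersections with low-dimensional singular locus established in~\cite{SS90}.
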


Here, we point out that the proof of Lemma~\ref{Igor} for $d-s=2$ was given by  the authors in~\cite{SS90} without using the smoothness assumption on $H_\A$.  
Therefore,  the smoothness condition on $H_\A$ can be relaxed for $d-s=2$.
 Indeed, the following bound follows immediately from a result of Cochrane~\cite[Theorem~4.3.5]{Co94}. 
  
\begin{lemma}\label{Igor1} If $H_\A \subseteq\mathbb F_q^d$ is a homogeneous variety
which is a complete intersection given by~\eqref{eq:HA} 
of dimension $\dim \overline H_\A =d-s$, where $\overline H_\A$ 
is given by~\eqref{eq:HA-bar}, then
$$  \left|\sum_{x\in H_\A} \psi(m\cdot x)\right|  \lesssim q^{d-s-1}
$$
for all $m\in \mathbb F_q^d\setminus\{(0,\ldots,0)\}$.
\end{lemma}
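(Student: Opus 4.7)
The plan is to exploit the cone structure of $H_\A$ to reduce the exponential sum to counting $\F_q$-points on a hyperplane section of the projective variety $\overline H_\P$. First I would decompose $H_\A$ along rays through the origin: since $\overline H_\A$ is the affine cone over $\overline H_\P$, each nonzero point of $H_\A$ lies on a unique line through the origin, and choosing an arbitrary representative $\widehat P\in\F_q^d\setminus\{0\}$ for every $\F_q$-rational projective point $P\in H_\P(\F_q)$ gives the partition
$$H_\A \;=\; \{0\}\;\cup\;\bigsqcup_{P\in H_\P(\F_q)}\bigl\{\lambda\widehat P:\lambda\in\F_q^*\bigr\}.$$
Summing accordingly and using the orthogonality identity $\sum_{\lambda\in\F_q^*}\psi(\lambda a)=q-1$ if $a=0$ and $-1$ otherwise, I would arrive at the closed-form expression
$$\sum_{x\in H_\A}\psi(m\cdot x) \;=\; 1 \;-\; |H_\P(\F_q)| \;+\; q\,N(m),$$
where $N(m)$ denotes the number of $\F_q$-rational points of the hyperplane section $\overline H_\P\cap\overline\Pi(m)$. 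This count is well defined independently of the choice of representatives because $\overline\Pi(m)$ is a linear subspace through the origin.

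Next I would apply Lang--Weil-type estimates to bound the two main terms. Since $\overline H_\P$ is absolutely irreducible of dimension $d-s-1$ by the complete intersection hypothesis (Definition~\ref{def1}), one has $|H_\P(\F_q)|\lesssim q^{d-s-1}$. For $N(m)$, the non-containment clause of Definition~\ref{def1} guarantees that $\overline H_\P\not\subseteq\overline\Pi(m)$, so the hyperplane section is a proper closed subvariety of the irreducible variety $\overline H_\P$ and therefore has dimension at most $d-s-2$; the standard point-counting bound then gives $N(m)\lesssim q^{d-s-2}$. Combining these two estimates yields $\bigl|\sum_{x\in H_\A}\psi(m\cdot x)\bigr|\lesssim q^{d-s-1}$, uniformly in $m\ne 0$.

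The main obstacle, and essentially the content of Cochrane's~\cite[Theorem~4.3.5]{Co94} in this setting, is establishing the uniform bound $N(m)\lesssim q^{d-s-2}$ on the hyperplane section point counts, with implied constant depending only on the degrees and the number of the defining polynomials $f_1,\ldots,f_s$. This is precisely the step where the complete intersection structure is indispensable: without the non-containment of $\overline H_\P$ in any hyperplane, the section could fill $\overline H_\P$ itself and the argument would degenerate to the trivial bound $|H_\A|\asymp q^{d-s}$, giving no saving at all.
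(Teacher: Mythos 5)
Your proof is correct. Note that the paper itself offers no argument for this lemma: it simply asserts that the bound ``follows immediately from a result of Cochrane~\cite[Theorem~4.3.5]{Co94}'' (and, for the $d-s=2$ case of the companion Lemma~\ref{Igor}, defers to~\cite{SS90}). What you have written is, in effect, the standard proof that underlies that citation: fibering the affine cone over its projectivization, applying orthogonality of $\psi$ along each punctured line to get $\sum_{x\in H_\A}\psi(m\cdot x)=1-|H_\P(\F_q)|+qN(m)$, and then bounding $|H_\P(\F_q)|\lesssim q^{d-s-1}$ by Lang--Weil and $N(m)\lesssim q^{d-s-2}$ via the dimension drop for the hyperplane section. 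Your version has the merit of making explicit exactly where each clause of Definition~\ref{def1} enters --- absolute irreducibility and $\dim\overline H_\A=d-s$ control the main term $|H_\P(\F_q)|$, while non-containment in a hyperplane forces $\overline H_\P\cap\overline\Pi(m)$ to be a proper closed subset of an irreducible variety and hence of dimension at most $d-s-2$ --- whereas the paper's one-line citation hides this. Two small points worth stating if you write this up: the identity $\overline H_\P$ irreducible follows from the absolute irreducibility of the cone $\overline H_\A$ (you use this tacitly when invoking the dimension drop), and the uniformity in $m$ of the bound $N(m)\lesssim q^{d-s-2}$ rests on the fact that the degree of the section $\overline H_\P\cap\overline\Pi(m)$ is bounded in terms of $\deg f_1,\ldots,\deg f_s$ alone, consistent with the paper's convention on implied constants.
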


The following estimate on the cardinality of $H_\A$  due to 
Chatzidakis,  van den Dries and  Macintyre~\cite[Proposition~3.3]{CvdDM}   
gives an extension of the result of 
Lang and Weil~\cite{LW54}. 

\begin{lemma}
\label{lem:LW}
 Suppose that $ \overline{V} \subseteq\mathbb A^d$ is an algebraic variety 
 with $\nu$ absolutely irreducible components and of dimension $e$ defined by polynomials over $\F_q$ and let 
 $V=\{x\in \overline{V} \cap \mathbb F_q^d\}$. Then 
$$  |V| -\nu q^e  \lesssim q^{e-1/2}.
$$
\end{lemma}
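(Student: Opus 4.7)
The plan is to derive the lemma from the classical Lang--Weil theorem by means of a decomposition argument. I would begin by decomposing $\overline V$ into its absolutely irreducible components over $\overline{\mathbb F}_q$, say $\overline V = W_1 \cup \cdots \cup W_N$, and separating the top-dimensional ones (those with $\dim W_i = e$) from any strictly lower-dimensional components. Among the top-dimensional components, the absolute Galois group $\mathrm{Gal}(\overline{\mathbb F}_q/\mathbb F_q)$ acts by permutation, and I would call a component \emph{rational} if it is fixed by this action, hence already defined over $\mathbb F_q$; the number $\nu$ in the statement is then naturally interpreted as the count of these rational top-dimensional components, since these will be the only ones contributing to the main term.

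The central analytic input is the classical Lang--Weil bound (invoked as a black box) applied to each rational top-dimensional component $W_i$, yielding
$$|W_i \cap \mathbb F_q^d| = q^e + O(q^{e-1/2}),$$
with the implied constant depending only on $d$ and the degrees of the defining polynomials. For a top-dimensional component that is not rational, any $\mathbb F_q$-point must be a Frobenius fixed point and hence lies in $W_i \cap W_j$ for some Galois-conjugate component $W_j \neq W_i$; such an intersection has dimension at most $e-1$, so contributes at most $O(q^{e-1})$ points, by an elementary Schwartz--Zippel type estimate (itself a special case of Lang--Weil in lower dimension). Strictly lower-dimensional components contribute $O(q^{e-1})$ points in the same way. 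Inclusion--exclusion applied to the overlaps $W_i \cap W_j$ of distinct top-dimensional rational components gives further $O(q^{e-1})$ error terms, since each such intersection has dimension strictly less than $e$.

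Summing the main contribution $q^e$ from each of the $\nu$ rational top-dimensional components and absorbing every $O(q^{e-1})$ error into $O(q^{e-1/2})$ (using $q^{e-1}\leq q^{e-1/2}$), I arrive at $|V| = \nu q^e + O(q^{e-1/2})$, as claimed. The main obstacle is really the Lang--Weil theorem itself, which I would take as a black box; once it is available, the passage to a possibly reducible $\overline V$ is essentially bookkeeping about Galois orbits and intersection dimensions. A secondary technical point is the uniformity of all implied constants in the degrees and number of defining polynomials, but this uniformity is already built into the $\lesssim$ convention adopted in the paper.
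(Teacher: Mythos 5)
The paper offers no proof of this lemma at all: it is imported wholesale from Chatzidakis, van den Dries and Macintyre \cite[Proposition~3.3]{CvdDM} as an extension of Lang--Weil, and is then used only through its corollary $|H_\A|=(1+o(1))q^{d-s}$ for complete intersections and, in Lemma~\ref{irreducible}, to force $\nu=1$. Your proposal actually derives the statement from the classical Lang--Weil bound for a single absolutely irreducible variety, and the derivation is sound: the decomposition into absolutely irreducible components, the observation that an $\mathbb F_q$-point of a non-Frobenius-stable top-dimensional component is a Frobenius fixed point and hence lies on the intersection of that component with a distinct conjugate (a subvariety of dimension at most $e-1$), the trivial $O(q^{\dim})$ count for lower-dimensional components and for pairwise intersections, and the inclusion--exclusion are exactly the standard bookkeeping that converts Lang--Weil into the form quoted here. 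Two points are worth flagging. First, your reading of $\nu$ as the number of \emph{Frobenius-stable} top-dimensional components is not cosmetic but a necessary correction to the statement as printed: with $\nu$ taken literally as the total number of absolutely irreducible components the lemma is false (e.g.\ $x_1^2+x_2^2=0$ in $\mathbb A^2$ with $-1$ a nonsquare has two conjugate line components and $e=1$, yet a single $\mathbb F_q$-point); the paper's applications, where $\nu=1$ is either assumed or being deduced, are consistent with your reading, which is the one in \cite{CvdDM}. Second, the uniformity of the implied constants in $d$ and the degrees is not conferred by the $\lesssim$ convention itself --- it must be supplied by the effective form of Lang--Weil you invoke, together with a B\'ezout-type bound ensuring that the number and degrees of the components and of their pairwise intersections are controlled by the defining data; this is available (indeed it is the main point of \cite{CvdDM}), but it is an input to the argument rather than a formality.
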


It is clear from  Lemma~\ref{lem:LW} that  
$|H_\A| =(1+ o(1)) q^{d-s}$ if $H_\A$ is a homogeneous variety, given by~\eqref{eq:HA} which 
is a complete intersection in $\mathbb F_q^d$.

Now, we endow a homogeneous variety $H_\A$ with the normalized surface measure $d\sigma_H$.
Recall that if $f: (\mathbb F_q^d, dx)\to \mathbb C$, then 
$$ 
\int f(x)~ d\sigma_H(x)=\frac{1}{|H_\A|} \sum_{x\in H_\A} f(x).
$$
The following decay estimates of the  Fourier coefficients
$$
(d\sigma_H)^\vee(m)= \frac{1}{|H_\A|} \sum_{x\in H_\A} \psi(m\cdot x), 
\qquad m\in \mathbb F_q^d,
$$
 on $H_\A$ follow 
immediately from Lemma~\ref{Igor1} and~\ref{Igor}.

\begin{lemma}\label{maindecay} Let $d\sigma_H$ be the normalized surface measure on the homogeneous variety $H_\A \subseteq\mathbb F_q^d$ given by~\eqref{eq:HA}, which is a complete intersection.  If the characteristic of $\mathbb F_q$ is sufficiently large, 
then: 
\begin{itemize}
\item[{\bf (i)}] If  $d-s=2$, then we have
$$ |(d\sigma_H)^\vee(m)|\lesssim q^{-1}$$
for all $m\in \mathbb F_q^d\setminus\{(0,\ldots,0)\}$. 
\item[{\bf (ii)}] If $H_\A$ is  smooth and $d-s\geq 3$, then 
$$|(d\sigma_H)^\vee(m)|\lesssim q^{-(d-s-1)/2}
$$
for all $m\in \mathbb F_q^d\setminus\{(0,\ldots,0)\}$. 
\end{itemize}

\end{lemma}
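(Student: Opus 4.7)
The proof plan is essentially a division: the Fourier coefficient is
$$
(d\sigma_H)^\vee(m)=\frac{1}{|H_\A|}\sum_{x\in H_\A}\psi(m\cdot x),
$$
so combining an upper bound on the character sum in the numerator with a lower bound on $|H_\A|$ yields the required decay. Since $H_\A$ is assumed to be a complete intersection, Lemma~\ref{lem:LW} applies with $\nu=1$ and $e=d-s$, giving $|H_\A|=(1+o(1))q^{d-s}$; in particular $|H_\A|\asymp q^{d-s}$ for $q$ sufficiently large, which is the denominator estimate I would use in both cases.

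For part (i), where $d-s=2$, I would apply Lemma~\ref{Igor1} (which does not require smoothness) to bound the character sum by $q^{d-s-1}=q$, and then divide by $|H_\A|\asymp q^{2}$ to obtain $|(d\sigma_H)^\vee(m)|\lesssim q^{-1}$ for all nonzero $m$. For part (ii), where $d-s\ge 3$ and $H_\A$ is smooth, I would instead invoke Lemma~\ref{Igor} to bound the character sum by $q^{(d-s+1)/2}$, and dividing by $|H_\A|\asymp q^{d-s}$ gives
$$
|(d\sigma_H)^\vee(m)|\lesssim \frac{q^{(d-s+1)/2}}{q^{d-s}}=q^{-(d-s-1)/2},
$$
as required.

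There is essentially no obstacle here: the statement is presented in the text as an immediate corollary of the preceding exponential sum estimates together with the Lang--Weil type cardinality bound. The only minor point to be careful about is that Lemma~\ref{lem:LW} is formulated for arbitrary $\overline V$ with $\nu$ absolutely irreducible components; invoking it with $\nu=1$ requires the complete intersection hypothesis (Definition~\ref{def1}) to ensure absolute irreducibility, so that the main term $q^{d-s}$ indeed dominates the error term $q^{d-s-1/2}$, justifying $|H_\A|\asymp q^{d-s}$ for large $q$. With this in place, both parts follow by direct substitution and no further work is needed.
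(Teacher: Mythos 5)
Your proposal is correct and follows exactly the paper's route: the paper derives Lemma~\ref{maindecay} "immediately" from Lemmas~\ref{Igor1} and~\ref{Igor} combined with the cardinality estimate $|H_\A|=(1+o(1))q^{d-s}$ from Lemma~\ref{lem:LW}, precisely as you do, including the use of the smoothness-free Lemma~\ref{Igor1} for the case $d-s=2$. No gaps.
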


\section{Fourier coefficients and $L^p-L^r$ averaging estimates over $H_\A$} \label{sec3}

\subsection{Estimates for varieties with given rate of decay of Fourier coefficients}
First we need the following general result which can be obtained by adapting the arguments in~\cite{CSW08}. 
For the sake of completeness, we provide the proof in full detail. 

\begin{lemma}\label{formula} Let $d\sigma_H$ be the normalized surface measure on an affine homogeneous variety $H_\A\subseteq\mathbb F_q^d$ given by~\eqref{eq:HA}, which is 
a complete intersection.
If $(d\sigma)^\vee(m)\lesssim q^{-\vartheta/2}$ for all $m\in \mathbb F_q^d\setminus\{(0,\ldots,0)\}$ and for some fixed $\vartheta >0$, then  $\frak{P}(p,r)$
holds with 
$$p =\frac{2s+\vartheta}{s+\vartheta} \mand r = \frac{2s+\vartheta}{s}.
$$
\end{lemma}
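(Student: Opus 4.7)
The plan is to establish the estimate at the single point $\bigl((s+\vartheta)/(2s+\vartheta),\,s/(2s+\vartheta)\bigr)$ by splitting the averaging kernel into a trivial constant part and an oscillatory mean-zero part, and then applying Riesz--Thorin interpolation to the oscillatory part between an $L^1 \to L^\infty$ bound (from its $L^\infty$ size) and an $L^2 \to L^2$ bound (from the Fourier decay hypothesis).

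Concretely, I would write $Tf := f * d\sigma_H = T_1 f + T_2 f$, where $T_1 f := f * \mathbf{1}$ is convolution with the constant function $1$ on $(\mathbb F_q^d, dx)$, and $T_2 f := f * (d\sigma_H - \mathbf{1})$. The operator $T_1$ is immediate: $T_1 f$ is the constant $\int f\, dx$, so by H\"older $\|T_1 f\|_r \leq \|f\|_p$ for every $1 \leq p, r \leq \infty$. Hence it suffices to control $T_2$ at the target exponents.

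For $T_2$, I would establish two endpoint bounds. Since $H_\A$ is a complete intersection, Lemma~\ref{lem:LW} yields $|H_\A| = (1+o(1))q^{d-s}$, so $\|d\sigma_H\|_\infty = q^d/|H_\A| \lesssim q^s$ and therefore $\|d\sigma_H - \mathbf{1}\|_\infty \lesssim q^s$. This gives the $L^1 \to L^\infty$ bound
$$
\|T_2 f\|_\infty \leq \|d\sigma_H - \mathbf{1}\|_\infty \, \|f\|_1 \lesssim q^s \|f\|_1.
$$
For the $L^2 \to L^2$ bound, Plancherel expresses the operator norm as $\max_m |\widehat{d\sigma_H - \mathbf{1}}(m)|$. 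The Fourier transform of $d\sigma_H - \mathbf{1}$ vanishes at $m = 0$ (as $(d\sigma_H)^\vee(0) = 1$ cancels the contribution of $\mathbf{1}$) and for $m \neq 0$ coincides, up to conjugation, with $(d\sigma_H)^\vee(m)$, which by hypothesis is $\lesssim q^{-\vartheta/2}$. Hence
$$
\|T_2 f\|_2 \lesssim q^{-\vartheta/2} \|f\|_2.
$$

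Finally, Riesz--Thorin interpolation on the segment from $(1/p, 1/r) = (1,0)$ to $(1/2, 1/2)$ gives, at the interpolated point $((1+\theta)/2,(1-\theta)/2)$, an operator norm at most $(q^s)^\theta (q^{-\vartheta/2})^{1-\theta} = q^{s\theta - \vartheta(1-\theta)/2}$. Setting $\theta = \vartheta/(2s+\vartheta)$ makes this exponent vanish, and the resulting $(1/p, 1/r)$ is exactly $\bigl((s+\vartheta)/(2s+\vartheta),\,s/(2s+\vartheta)\bigr)$, so $\|T_2 f\|_r \lesssim \|f\|_p$. Adding the trivial estimate for $T_1$ yields $\|Tf\|_r \lesssim \|f\|_p$, which is $\Ppr$ at the claimed exponents. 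The only real obstacle is bookkeeping: one must keep the normalizations of $dx$, the Fourier transform, and $(\cdot)^\vee$ consistent so that the $m=0$ contributions of $d\sigma_H$ and $\mathbf{1}$ genuinely cancel and the two endpoint bounds combine cleanly via interpolation; once this is in order, the two endpoints are immediate from $|H_\A| \asymp q^{d-s}$ and the hypothesized Fourier decay, and Riesz--Thorin delivers the critical exponents exactly.
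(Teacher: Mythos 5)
Your proposal is correct and follows essentially the same route as the paper: the decomposition $d\sigma_H = (d\sigma_H - \mathbf{1}) + \mathbf{1}$ is exactly the paper's $d\sigma_H = \widehat{K} + 1$ with $K = (d\sigma_H)^\vee - \delta_0$, and the two endpoint bounds ($L^1 \to L^\infty$ via $|H_\A| \asymp q^{d-s}$, $L^2 \to L^2$ via Plancherel and the Fourier decay hypothesis) followed by Riesz--Thorin interpolation are identical to the paper's argument. Your explicit computation of $\theta = \vartheta/(2s+\vartheta)$ correctly recovers the claimed exponents.
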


\begin{proof}
We must show that  
$$\|f\ast d\sigma_H\|_{L^{r}({\mathbb F_q^d},dx)}\lesssim \|f\|_{{L^p}({\mathbb F_q^d},dx)}
$$
for all function $f: (\mathbb F_q^d, dx) \to \mathbb C$ 
with the above values of $p$ and $r$. 

 Define a function $K$ on $({\mathbb F_q^d}, dm)$ by $K=(d\sigma_H)^\vee -\delta_0$.
Observe that $d\sigma_H(x)= \widehat{K}(x)+\widehat{\delta_0}(x)= \widehat{K}(x)+1$  for $x\in (\mathbb F_q^d, dx)$, 
where for a function $F$ on $({\mathbb F_q^d}, dm)$ we define
$$
\widehat{F}(x) =  \sum_{m \in \F_q^d} F(m) \psi(-m\cdot x).
$$
and, as before,  $m\cdot x$ denotes the inner products of $m$ and $x$. 
Since $\vartheta >0$ and $dx$ is the normalized counting measure on $\mathbb F_q^d$,  it 
follows from  Young's inequality (see~\cite{Gr03, Jo01}) that 
 $$\|f\ast 1\|_{L^r({\mathbb F_q^d},dx)}\lesssim \|f\|_{L^p({\mathbb F_q^d},dx)}.$$ 
 Thus, it suffices to prove that 
\begin{equation}
\label{ref}
\|f\ast \widehat{K}\|_{L^r({\mathbb F_q^d},dx)}\lesssim \|f\|_{L^p({\mathbb F_q^d},dx)}
\end{equation}
for all functions $f:(\mathbb F_q^d, dx) \to \mathbb C$.

Notice that~\eqref{ref} can be obtained by interpolating 
\begin{equation}\label{first1}
\|f\ast \widehat{K}\|_{L^2({\mathbb F_q^d},dx)}\lesssim  q^{-\vartheta/2}\|f\|_{L^2({\mathbb F_q^d},dx)}
\end{equation}
and 
\begin{equation}\label{second2}
\|f\ast \widehat{K}\|_{L^{\infty}({\mathbb F_q^d},dx)}\lesssim  q^s \|f\|_{L^1({\mathbb F_q^d},dx)}.
\end{equation}
It remains to prove~\eqref{first1} and~\eqref{second2}. By the definition of $K$ and the assumption that $(d\sigma_H)^\vee(m)\lesssim q^{-\vartheta/2}$ for $m\neq (0,\ldots,0)$, we see that
$$ \max\limits_{m\in \mathbb F_q^d} |K(m)| \lesssim q^{-\vartheta/2}.
$$ 

Therefore,~\eqref{first1} follows by applying the Plancherel theorem (see~\cite{Gr03, Jo01}):
\begin{equation*}
\begin{split}
\|f\ast \widehat{K}\|_{L^2({\mathbb F_q^d},dx)} 
& = \|f^\vee K\|_{L^2({\mathbb F_q^d},dm)} \\
& \lesssim q^{-\vartheta/2} \|f^\vee\|_{L^2({\mathbb F_q^d},dm)}
= q^{-\vartheta/2} \|f\|_{L^2({\mathbb F_q^d},dx)}.
\end{split}
\end{equation*}
To prove~\eqref{second2}, notice that $|H_\A| =(1+ o(1)) q^{d-s}$, because $H_\A$ is a complete intersection. 
From Young's inequality and the observation that 
$\|\widehat{K}\|_{L^\infty({\mathbb F_q^d},dx)}\lesssim q^s$, we obtain~\eqref{second2}:
$$ \|f\ast \widehat{K}\|_{L^{\infty}({\mathbb F_q^d},dx)} \leq \|\widehat{K}\|_{L^\infty({\mathbb F_q^d},dx)}\|f\|_{L^1({\mathbb F_q^d},dx)}\lesssim  q^s \|f\|_{L^1({\mathbb F_q^d},dx)}.$$
Thus,  the proof of Lemma~\ref{formula} is complete.
\end{proof}

\subsection{Main estimates} 

As a direct application of the Fourier decay estimates in Lemma~\ref{maindecay}, we can now 
derive averaging results related to general homogeneous variety $H_\A$, which is a complete intersection.
Applying Lemma~\ref{formula} with Lemma~\ref{maindecay}  yields the result below.

\begin{lemma}\label{lem:HomVar} Let $d\sigma_H$ be the normalized surface measure on a homogeneous variety $H_\A \subseteq \mathbb F_q^d$, given by~\eqref{eq:HA}, 
which is a complete intersection.  If the characteristic of $\mathbb F_q$ is sufficiently large, then:
\begin{itemize}
\item[{\bf (i)}] If $d-s=2$, then 
$$ \Ppr  \iff \(\frac{1}{p}, \frac{1}{r}\) \in \hull{
P_{0,0},P_{0,1},P_{1,1},\left(\frac{d}{2d-2}, \frac{d-2}{2d-2}\right)}.
$$
\item[{\bf (ii)}] If $H_\A$ is a   smooth  and $d-s\geq 3$, then 
$$
\(\frac{1}{p}, \frac{1}{r}\)  \in \hull{P_{0,0},P_{0,1},P_{1,1},\left(\frac{d-1}{d+s-1}, \frac{s}{d+s-1}\right)} \Longrightarrow \Ppr.
$$ 

\end{itemize}
\end{lemma}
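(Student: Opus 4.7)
The plan is to obtain a single non-trivial endpoint estimate in each case by feeding the Fourier-decay rates from Lemma~\ref{maindecay} into the general inequality of Lemma~\ref{formula}, and then to interpolate with the three trivial estimates at $P_{0,0}$, $P_{0,1}$, $P_{1,1}$. These three trivial vertices are immediate: $P_{0,0}$ and $P_{1,1}$ follow from Young's inequality together with $\|d\sigma_H\|_{L^1(dx)}=1$, and $P_{0,1}$ comes from combining $\|f\ast d\sigma_H\|_{L^\infty(dx)}\leq \|f\|_{L^\infty(dx)}$ with $\|g\|_{L^1(dx)}\leq \|g\|_{L^\infty(dx)}$, which holds because $dx$ is a probability measure.

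For part~(i), the assumption $d-s=2$ places us in Lemma~\ref{maindecay}(i), yielding $|(d\sigma_H)^\vee(m)|\lesssim q^{-1}$ for every nonzero frequency $m$; this is precisely the hypothesis of Lemma~\ref{formula} with $\vartheta=2$. Substituting $\vartheta=2$ and $s=d-2$ into its conclusion produces $\Ppr$ at $p=(2d-2)/d$, $r=(2d-2)/(d-2)$, which is exactly the target vertex $(d/(2d-2),(d-2)/(2d-2))$. For part~(ii), the smoothness hypothesis together with $d-s\geq 3$ places us in Lemma~\ref{maindecay}(ii), giving $\vartheta=d-s-1$; substitution then yields $\Ppr$ at $p=(d+s-1)/(d-1)$, $r=(d+s-1)/s$, namely the vertex $((d-1)/(d+s-1),s/(d+s-1))$. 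Riesz--Thorin interpolation with the three trivial vertices then covers the full convex hull in each case, establishing the $\Longleftarrow$ implication of~(i) and the entire statement of~(ii).

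It remains to prove the $\Longrightarrow$ direction of~(i). I would follow the standard test-function strategy from~\cite{CSW08}: evaluate $\Ppr$ on $f=\delta_0$ and $f=\mathbf{1}_{H_\A}$, using $|H_\A|=(1+o(1))q^2$ from Lemma~\ref{lem:LW} and the complete-intersection hypothesis. A short computation with $f=\delta_0$ yields $\|\delta_0\ast d\sigma_H\|_{L^r(dx)}\asymp q^{-d/r}|H_\A|^{1/r-1}$ and $\|\delta_0\|_{L^p(dx)}=q^{-d/p}$, producing the linear constraint $d/p-(d-2)/r\leq 2$, which is saturated precisely along the edge joining the non-trivial vertex to $P_{1,1}$. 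For $f=\mathbf{1}_{H_\A}$, the homogeneity of the defining polynomials forces $-H_\A=H_\A$, so $(\mathbf{1}_{H_\A}\ast d\sigma_H)(0)=1$; this yields the second constraint $(d-2)/p\leq d/r$, saturated along the edge joining $P_{0,0}$ to the non-trivial vertex. Combined with the universal bounds $0\leq 1/p$ and $1/r\leq 1$, these two inequalities cut out precisely the polygon of part~(i).

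The main technical hurdle is mostly bookkeeping: one must verify that the parameter $\vartheta$ of Lemma~\ref{formula} matches the Fourier-decay exponent produced by Lemma~\ref{maindecay} in each regime, and that the two test functions saturate exactly the two non-trivial edges of the target polygon. Nothing new beyond Sections~\ref{newsec2}--\ref{sec3} is needed, but the entire argument relies on the standing assumption that the characteristic of $\mathbb F_q$ is sufficiently large for Lemma~\ref{maindecay} to be in force.
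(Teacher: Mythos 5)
Your proposal is correct and follows essentially the same route as the paper: the sufficiency direction is exactly the paper's argument (Lemma~\ref{maindecay} feeding $\vartheta=2$ resp.\ $\vartheta=d-s-1$ into Lemma~\ref{formula}, then interpolation with the trivial vertices), and your computations of the endpoint exponents match. The only cosmetic difference is in the necessity direction of~(i), where you derive the second constraint $(d-2)/p\le d/r$ by testing on $\mathbf{1}_{H_\A}$ (using $-H_\A=H_\A$), whereas the paper obtains the identical inequality by dualizing the $\delta_0$ estimate; these are equivalent.
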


\begin{proof}
To prove~{\bf{(i)}}, let us assume that $d-s=2$. Then $|H_\A|=(1+ o(1)) q^{d-s}= q^2$, because $H_\A$ is a complete intersection. 
Now, suppose that $\Ppr$.  In particular, we see that  
$$
q^{-(d+2r-2)/r}\asymp\|\delta_0\ast d\sigma_H\|_{L^r(\mathbb F_q^d, dx)}\lesssim \|\delta_0\|_{L^p(\mathbb F_q^d, dx)}=q^{-d/p}.
$$ 
It therefore  follows that 
$$\frac{-d-2r+2}{r}\leq \frac{-d}{p}.
$$ 
By duality, we also have 
$$
\frac{-d-2p^*+2}{p^*}\leq \frac{-d}{r^*},
$$ 
where 
$$p^* = \frac{p}{p-1}\qquad \text{and} \qquad r^*= \frac{r}{r-1}
$$ 
denote the H\"{o}lder conjugates
 of $p$ and $r$, respectively.
In conclusion, 
\begin{equation}
\label{eq:1p1r}
\(\frac{1}{p}, \frac{1}{r}\) \in \hull{P_{0,0},P_{0,1},P_{1,1},\left(\frac{d}{2d-2}, \frac{d-2}{2d-2}\right)}.  
\end{equation}

Conversely, we now assume that the inclusion~\eqref{eq:1p1r} holds.
If $1\leq r \leq p\leq \infty$, then it is clear that $\Ppr$, because both $d\sigma_H$ and $({\mathbb F_{q}^d},dx)$ have total mass $1$.
By the interpolation theorem, it therefore suffices to prove that 
$$
\frak{P}\left(\frac{2d-2}{d}, \frac{2d-2}{d-2}\right)
$$
holds.  Since $d-s=2$,  applying Lemma~\ref{formula} with  
Lemma~\ref{maindecay}~{\bf{(i)}} yields the above property, and the the proof of Lemma~\ref{lem:HomVar}~{\bf{(i)}} is complete.

In order to prove~{\bf{(ii)}}, it is enough to show that 
 $$\frak{P}\left(\frac{d+s-1}{d-1},\frac{d+s-1}{s}\right) $$
 holds. 
However, this follows immediately by using Lemma~\ref{formula} together with 
Lemma~\ref{maindecay}~{\bf{(ii)}}.
\end{proof}

\begin{remark}Even if Lemma~\ref{lem:HomVar} provides us of powerful averaging results on general homogeneous varieties,  
applying it in practice may not be simple, because it contains certain abstract hypotheses.
\end{remark}

\section{Proofs of Main Results}
\label{sec4}

\subsection{Preliminaries} 
 In this section, we complete the proofs of Theorem~\ref{main1} and Theorem~\ref{main2} which are considered as main theorems in this paper. 
We complete the proofs by showing that Lemma~\ref{lem:HomVar} is a general version of  both Theorem~\ref{main1} and Theorem~\ref{main2}.
To do this, we begin by recalling  from~\eqref{defH} that for each $k=2,3,\ldots, d-1$, our homogeneous variety $\cH_k$ is exactly the common solutions in $\mathbb F_q^d$ of a system of the $(d-k)$ equations
\begin{equation}\label{system} 
\begin{array}{ll} &h_1(x)= x_1^2+x_2^2 +\cdots+ x_k^2 -x_{k+1}^2=0,\\
&\qquad \qquad \qquad \qquad \vdots\\
& h_j(x)= x_1^{j+1}+x_2^{j+1} +\cdots+ x_k^{j+1} -x_{k+j}^{j+1}=0,\\
&\qquad \qquad \qquad \qquad \vdots\\
 &h_{d-k}(x)= x_1^{d-k+1}+x_2^{d-k+1} +\cdots+ x_k^{d-k+1} -x_{d}^{d-k+1}=0,\\
\end{array}, 
\end{equation}
where $j=1,2,\ldots, (d-k)$. Let $s=d-k$ which is the number of homogeneous equations $h_j$ defining $\cH_k$. Then it is clear that $\overline{\cH}_k$ is an  affine cone over its corresponding projective variety determined by $s$-homogeneous polynomials $h_j$.  Thus, if we are able to show that the conclusions 
of Propositions~\ref{Pro1} and~\ref{Pro2} 
hold for $k=2$ then  Theorem~\ref{main1} follows from 
Lemma~\ref{lem:HomVar}~{\bf{(i)}}. Furthermore, if all  
Propositions~\ref{Pro1}--\ref{Pro3} hold true for $k\geq 3$, then Theorem~\ref{main2}  follows from Lemma~\ref{lem:HomVar}~{\bf{(ii)}} where the smoothness condition on $\overline{\cH}_k$ is essential.
In summary, to prove both Theorem~\ref{main1} and Theorem~\ref{main2}, it suffices to justify Propositions~\ref{Pro1}--\ref{Pro3}.

\subsection{Proof of Proposition~\ref{Pro1}} 

 Since any hyperplane in $\mathbb A^d$ is a subspace with dimension $d-1$, it suffices to prove that 
there exists $d$ linearly  independent points $\{P_1, P_2, \ldots, P_d\} \subseteq\overline \cH_k$.                                              
Now fix $k=2,3,\ldots, (d-1)$. For each $j=1,2,\ldots, d-k$,  choose a $\beta_{k+j}\in \mathbb A$ such that $\beta
_{k+j}^{j+1}=1$ and $\beta_{k+j}\neq 1$.
Since $\mathbb A$ is an algebraic closure and the characteristic of $\mathbb F_q$ is sufficiently large,  the $\beta_{k+j}$ always exists.
Denote by $I_{k\times k}$ the $k\times k$ identity matrix.
We also define $1_{k\times (d-k)}$ as the $k\times (d-k)$ matrix whose all entries are $1$.
Also define the following $(d-k)\times k$ matrix $C_{(d-k)\times k}$ and the $(d-k)\times (d-k)$ matrix $ D_{(d-k)\times (d-k)}$: 
\begin{equation*}
\begin{split}
& C_{(d-k)\times k} = 
\left[\begin{matrix}
      0 & 0 & \cdots & 0& 1\\
      0 & 0& \cdots & 0&1\\
       \vdots&\vdots& \vdots&\vdots&\vdots \\
      0 & 0 & \cdots & 0&1\\
      0 & 0 & \cdots & 0&1 \\
\end{matrix}\right], \\
&D_{(d-k)\times (d-k)}  = 
\left[\begin{matrix}
      \beta_{k+1} & 1 & \cdots & 1& 1\\
      1 & \beta_{k+2}&1 & \cdots &1\\
       \vdots&\vdots& \vdots&\vdots&\vdots \\
     1 &  \cdots&1 & \beta_{d-1}&1\\
      1& 1&\cdots & 1&\beta_d 
\end{matrix}\right].
\end{split}
\end{equation*}

Now consider the $d\times d$ matrix $M_{d\times d}$ defined by
$$M_{d\times d}=\left[\begin{matrix}P_1\\
                                            P_2\\
                                             \vdots\\
                                             P_d \end{matrix}\right]
=\left[ \begin{matrix}
                                                 I_{k\times k}& 1_{k\times (d-k)}\\
                                                 C_{(d-k)\times k} & D_{(d-k)\times(d- k)} \end{matrix}\right].$$
Note that all $P_1,P_2, \ldots, P_d$ are solutions of a system of 
equations~\eqref{system}. Hence, it follows that
$\{P_1,P_2, \ldots, P_d\} \subseteq\overline{\cH}_k$ for any $k=2,3,\ldots, (d-1)$. 
Moreover, since $\beta_{k+j}-1\neq 0$ for all $j=1,2, \ldots, d-k$,  it follows from simple Gauss elimination that
the rank of the matrix $ M_{d\times d}$ is exactly $d$, which  completes the proof of Proposition~\ref{Pro1}.

\subsection{Proof of Proposition~\ref{Pro2}} 
Recall from~\eqref{system}  that  $\overline{\cH}_k \subseteq\mathbb A^d$ is given by a system of $(d-k)$ homogeneous equations.
 For each $j=1,2, \ldots, d-k$, define an algebraic set
$$ \overline{\cH}_k^j=\{x\in \mathbb A^d~:~h_j(x)=0\},$$
where $h_j$ is defined by~\eqref{system}.
By the definition of $\overline{\cH}_k$, it follows that
\begin{equation}\label{one}\overline{\cH}_k=\bigcap_{j=1}^{d-k} \overline{\cH}_k^j.
\end{equation}
We need the following claim.

\begin{lemma}\label{claim1} For each $n\in \{1,2,\ldots, (d-k-1)\}$, we have
$$ \left(\bigcap_{j=1}^n \overline{\cH}_k^j\right) \bigcap \overline{\cH}_k^{n+1}
\neq \emptyset $$
and there exists $\alpha\in \mathbb A^d$ such that
$$ \quad \alpha \in \bigcap_{j=1}^n \overline{\cH}_k^j \quad \text{and}\quad \alpha\notin \overline{\cH}_k^{n+1}.$$
\end{lemma}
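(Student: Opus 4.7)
The plan is to exhibit explicit witnesses in $\mathbb{A}^d$ for both assertions, exploiting the diagonal structure of the defining equations~\eqref{system}. The key structural observation is that for every $j \in \{1, \ldots, d-k\}$, the variable $x_{k+j}$ appears in the system only through the single monomial $x_{k+j}^{j+1}$ inside $h_j$, and in no other $h_\ell$. Hence, once $x_1, \ldots, x_k$ are fixed, the tail variables $x_{k+1}, \ldots, x_d$ decouple completely: each $x_{k+j}$ can be chosen independently in $\mathbb{A}$ to force $h_j = 0$ or $h_j \neq 0$.

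For the first assertion, that $\bigl(\bigcap_{j=1}^n \overline{\cH}_k^j\bigr) \cap \overline{\cH}_k^{n+1} \neq \emptyset$, I would simply take the origin $(0,\ldots,0) \in \mathbb{A}^d$: since every $h_j$ is a homogeneous polynomial of positive degree, $h_j(0,\ldots,0) = 0$ for all $j$, so the origin lies in $\overline{\cH}_k^j$ for every $j \in \{1, \ldots, d-k\}$.

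For the existence of $\alpha \in \bigcap_{j=1}^n \overline{\cH}_k^j$ with $\alpha \notin \overline{\cH}_k^{n+1}$, I would construct $\alpha = (\alpha_1, \ldots, \alpha_d)$ coordinate-by-coordinate. First set $\alpha_1 = 1$ and $\alpha_2 = \cdots = \alpha_k = 0$, so that $h_j(\alpha) = 1 - \alpha_{k+j}^{j+1}$ for every $j = 1, \ldots, d-k$. For each $j = 1, \ldots, n$, take $\alpha_{k+j} = 1$, which forces $h_j(\alpha) = 0$. Then set $\alpha_{k+n+1} = 0$, so that $h_{n+1}(\alpha) = 1 \neq 0$, and fill in the remaining coordinates $\alpha_{k+n+2}, \ldots, \alpha_d$ arbitrarily (for instance, with zeros). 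The resulting $\alpha$ lies in $\overline{\cH}_k^1 \cap \cdots \cap \overline{\cH}_k^n$ but not in $\overline{\cH}_k^{n+1}$.

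There is no serious obstacle: the diagonal form of the $h_j$ makes the tail variables act independently, and the algebraic closure $\mathbb{A}$ supplies whatever roots might conceivably be needed (none are actually required in the construction above). The only subtle point is the need to ensure that, after the $(x_1, \ldots, x_k)$-component is fixed, the partial sum $x_1^{n+2} + \cdots + x_k^{n+2}$ appearing in $h_{n+1}$ is genuinely nonzero, which is arranged by the choice $\alpha_1 = 1$, $\alpha_2 = \cdots = \alpha_k = 0$, combined with $\alpha_{k+n+1} = 0$.
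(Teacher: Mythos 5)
Your proof is correct and follows essentially the same route as the paper: the first assertion via the origin, and the second via a point with $\alpha_1=1$, $\alpha_2=\cdots=\alpha_k=0$, $\alpha_{k+j}=1$ for $j\le n$, and $\alpha_{k+n+1}$ chosen so that $\alpha_{k+n+1}^{n+2}\neq 1$ (the paper picks a general $l$ with $l^{n+2}\neq 1$; your choice $\alpha_{k+n+1}=0$ is a valid special case). The decoupling of the tail variables that you highlight is exactly the mechanism the paper exploits.
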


\begin{proof}
The first part  is trivial.
For the second part of this claim, fix $n\in \{1,2,\ldots, (d-k-1)\}$ and let $l\in \overline{\mathbb F}_q$ with $l^{n+2}\neq 1$.
Now, choose an $\alpha=(\alpha_1, \alpha_2,\ldots, \alpha_d)\in \mathbb A^d$ whose coordinates satisfy that
$$\alpha_j=\left\{\begin{array}{ll}0\quad&\text{if}~~ j=2,3,\ldots, k,\\
 l \quad&\text{if}~~ j=k+n+1,\\
1     \quad&\text{otherwise.}\end{array}\right.
$$ 
Then it is straightforward to check that $\alpha\in \bigcap_{j=1}^n \overline{\cH}_k^j \quad \text{and}\quad \alpha\notin \overline{\cH}_k^{n+1}$ and the result follows. 
\end{proof}

 To compute the dimension of $\overline{\cH}_k$, we apply the following result,
 see~\cite[Page~55]{Ga02} for a proof.  

\begin{lemma}\label{dimlem} Let $\overline{V} \subseteq\mathbb A^d$ be an irreducible algebraic set, and let $f\in \overline{\mathbb F}_q[x_1,x_2,\ldots, x_d]$ be a nonconstant polynomial which does not vanish identically on $\overline{V}$.  In addition, let us define $ \mathbb Z(f)=\{x\in \mathbb A^d~:~f(x)=0\}$. If $\overline{V} \cap \mathbb Z(f)\neq \emptyset$, then we have
$$ \dim(\overline{V} \cap \mathbb Z(f))=\dim{\overline{V}}-1.$$
\end{lemma}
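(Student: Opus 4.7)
The plan is to translate the geometric statement into commutative algebra and then invoke Krull's principal ideal theorem (Hauptidealsatz). First I would pass to the coordinate ring $A = \overline{\mathbb F}_q[\overline V]$, which is an integral domain because $\overline V$ is irreducible, and whose Krull dimension equals $\dim \overline V$. The polynomial $f$ descends to an element $\bar f \in A$ which is nonzero by the hypothesis that $f$ does not vanish identically on $\overline V$, and the intersection $\overline V \cap \mathbb Z(f)$ is the affine closed subscheme cut out by $A/(\bar f)$.

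Next, since $A$ is a domain, the element $\bar f \neq 0$ is a non-zero-divisor; and since $\overline V \cap \mathbb Z(f) \neq \emptyset$, the ideal $(\bar f)$ is proper. Krull's Hauptidealsatz then asserts that every minimal prime $\mathfrak p$ over $(\bar f)$ in $A$ has height exactly $1$. Geometrically, the minimal primes of $(\bar f)$ correspond precisely to the irreducible components of $\overline V \cap \mathbb Z(f)$, so it remains to compute the dimension of each component.

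To turn this height statement into a dimension statement, I would use that $A$ is a finitely generated integral domain over a field, hence catenary and equidimensional in the sense that $\dim A = \dim(A/\mathfrak p) + \mathrm{ht}(\mathfrak p)$ for every prime $\mathfrak p$. Applied to a minimal prime over $(\bar f)$, this gives $\dim(A/\mathfrak p) = \dim A - 1 = \dim \overline V - 1$. Taking the maximum over all minimal primes yields the desired identity $\dim(\overline V \cap \mathbb Z(f)) = \dim \overline V - 1$.

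The main technical point is exactly the identity $\dim(A/\mathfrak p) = \dim A - \mathrm{ht}(\mathfrak p)$, which fails for general Noetherian rings but does hold for finitely generated domains over a field, since all maximal chains of prime ideals in such a ring have the same length, equal to the transcendence degree of the fraction field over $\overline{\mathbb F}_q$. An alternative route that sidesteps the Hauptidealsatz is to apply Noether normalization: one reduces via a finite surjection $\overline V \to \mathbb A^r$ with $r = \dim \overline V$ to the case of a hypersurface in affine space, where the dimension drop can be read off directly from the transcendence degree of the function field.
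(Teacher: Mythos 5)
Your proof is correct. The paper does not prove this lemma itself but simply cites Gathmann's lecture notes; your argument via Krull's Hauptidealsatz together with the dimension formula $\dim(A/\mathfrak p)+\mathrm{ht}(\mathfrak p)=\dim A$ for finitely generated domains over a field is the standard proof of this classical fact and is essentially what the cited reference does.
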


We are  ready to prove Proposition~\ref{Pro2}.
It is not hard to see that $\overline{\cH}_k^1$,  $k=2,3, \ldots, d-1$, is absolutely irreducible, because  it is the same as the absolute irreducibility of the polynomial
$$ F(x_1,x_2,  \ldots, x_{k+1})= x_1^2 + x_2^2+ \ldots + x_k^2-x_{k+1}^2.$$
 Assume $F = RH$. Clearly, we see $deg_{x_1} R = deg_{x_1} H = 1$.
 Write
$$ R =\left (x_1+g(x_2,...,x_{k+1})\right)\left(x_1+h(x_2,...,x_{k+1})\right).$$
 We see that we should have $g = -h$ and so 
 $x_2^2+ \ldots + x_k^2-x_{k+1}^2 = -g^2$,
 which is easy to rule out for $k \ge 2$ (for example, by specializing
 $x_3=\cdots= x_k = 0, x_{k+1} = 1)$.

Since $\overline{\cH}_k^1$,  $k=2,3, \ldots, d-1$, is absolutely irreducible, it follows from the Affine Jacobian criterion that $\dim\overline{\cH}_k^1=d-1$ (see Lemma~\ref{Jac} below).
Notice that this completes the proof of Proposition~\ref{Pro2} in the case when $k=d-1$ with $k\geq 2$.
Thus, we may assume that $ d-k\ge 2$.
Observe by induction that  Proposition~\ref{Pro2} is a direct result from the following 
statement.

\begin{lemma}\label{irreducible} Assume that the characteristic of $\mathbb F_q$ is sufficiently large. Let $n\in \{1,2, \ldots, (d-k-1)\}$ with $k=2,3, \ldots, d-2$.
Suppose that $\bigcap_{j=1}^n \overline{\cH}_k^j$ is absolutely irreducible with dimension $d-n$.
Then $\bigcap_{j=1}^{n+1} \overline{\cH}_k^j$ is also absolutely irreducible with dimension $d-n-1$.
\end{lemma}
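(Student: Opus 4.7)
The proof splits into a dimension count and an absolute-irreducibility argument. For the dimension, Lemma~\ref{claim1} furnishes a point $\alpha\in\bigcap_{j=1}^n\overline{\cH}_k^j$ at which $h_{n+1}$ does not vanish, so $h_{n+1}$ is not identically zero on the (inductively absolutely irreducible) variety $\bigcap_{j=1}^n\overline{\cH}_k^j$ of dimension $d-n$; Lemma~\ref{dimlem} then yields $\dim\(\bigcap_{j=1}^{n+1}\overline{\cH}_k^j\)=d-n-1$, and Krull's principal ideal theorem forces equidimensionality at this dimension.

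For irreducibility my strategy is to exploit that each $h_j$ involves only $x_1,\ldots,x_k$ and $x_{k+j}$, which yields the product decomposition $\bigcap_{j=1}^m \overline{\cH}_k^j=W_m\times\mathbb{A}^{d-k-m}$, where $W_m\subseteq\mathbb{A}^{k+m}$ is cut out by $h_1,\ldots,h_m$ in the first $k+m$ coordinates; this reduces the task to showing that $W_{n+1}$ is absolutely irreducible of dimension $k$. Given the equidimensionality established above, Lemma~\ref{lem:LW} applied to $W_{n+1}$ delivers absolute irreducibility as soon as $|W_{n+1}(\F_q)|=q^k+O\(q^{k-1/2}\)$. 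Counting the $(j+1)$-th roots of $A_j(x):=x_1^{j+1}+\cdots+x_k^{j+1}$ in $\F_q$ via an expansion in multiplicative characters converts this into
$$
|W_{n+1}(\F_q)| = q^k+\sum_{\substack{\chi_j^{j+1}=\chi_0 \\ \text{not all trivial}}} \sum_{x\in \F_q^k}\prod_{j=1}^{n+1}\chi_j\(A_j(x)\),
$$
so the required point count reduces to an $O\(q^{k-1/2}\)$ estimate on each multivariable character sum appearing on the right-hand side.

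The main obstacle is this multivariable character sum bound. I would fix $x_2,\ldots,x_k$ at generic values (those for which every $B_j:=x_2^{j+1}+\cdots+x_k^{j+1}$ is nonzero; the exceptional $O(q^{k-2})$ locus contributes at most $O(q^{k-1})$ and is absorbed into the error) and adapt the proof of Lemma~\ref{endlem3} to the resulting one-variable sum $\sum_{x_1\in\F_q}\prod_j\chi_j(x_1^{j+1}+B_j)$: Lemma~\ref{endlem1} ensures each $x_1^{j+1}+B_j$ is squarefree; an analogue of Lemma~\ref{endlem15}---obtained by absorbing $B_j$ through the substitution $t\mapsto t\,B_j^{1/(j+1)}$ in a suitable finite extension of $\F_q$---supplies an irreducible factor of the top-degree polynomial not dividing any lower one; and Lemma~\ref{endlem2} then produces the needed $O(q^{1/2})$ cancellation for the $x_1$-sum. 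Multiplying by $q^{k-1}$ from the summation over the frozen variables yields the target $O(q^{k-1/2})$ bound, completing the outline.
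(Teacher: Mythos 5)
Your architecture coincides with the paper's: the dimension follows from Lemma~\ref{claim1} and Lemma~\ref{dimlem}; absolute irreducibility is reduced via Lemma~\ref{lem:LW} to the point count $q^{k}+O(q^{k-1/2})$ for the variety in the variables $x_1,\ldots,x_{k+n+1}$ (the remaining coordinates being free); the count is expanded in multiplicative characters; and $x_2,\ldots,x_k$ are frozen outside a bad locus of size $O(q^{k-2})$. The gap is in the final, and hardest, step: the bound for $\sum_{x_1\in\F_q}\prod_{j}\chi_j(x_1^{j+1}+B_j)$ with general nonzero constants $B_j$. Your proposed ``analogue of Lemma~\ref{endlem15}'' obtained by the substitution $t\mapsto t\,B_j^{1/(j+1)}$ does not exist as stated: the scaling factor depends on $j$, so no single change of variable normalizes all the polynomials $t^{j+1}+B_j$ simultaneously; after normalizing the top one, the lower ones become $t^{j+1}+c_j$ with unrelated constants $c_j$, about which Lemma~\ref{endlem15} says nothing. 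Moreover the conclusion you want (an irreducible factor of $t^{n+2}+B_{n+1}$ coprime to all the lower polynomials) can fail for special tuples $(B_j)$: the common roots of $t^{n+2}+B_{n+1}$ and $t^{j+1}+B_j$ form a coset of the group of $\gcd(n+2,j+1)$-th roots of unity, and already for $n+2=6$ one has $\gcd(6,2)+\gcd(6,3)+\gcd(6,4)+\gcd(6,5)=8>6$, so all roots of the top polynomial could a priori be absorbed by the lower ones. The substitution idea is legitimate exactly when $k=2$: there $B_j=a^{j+1}$, the single substitution $t\mapsto at$ works, and the sum reduces to Lemma~\ref{endlem3} (which is where the Zsigmondy-type Lemma~\ref{endlem15} is genuinely used). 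That is precisely how the paper treats $k=2$.

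For $k\ge 3$ the paper takes a different and simpler route that your outline is missing: it shows that for all but $O(q^{k-2})$ of the frozen vectors $\overline x=(x_2,\ldots,x_k)$ the polynomials $t^{j+1}+A_j(\overline x)$, $j=1,\ldots,n+1$, are already \emph{pairwise coprime} --- a common root of $t^{i_1}-A$ and $t^{i_2}-B$ forces $A^{i_2}=B^{i_1}$, which for $k\ge 3$ is a nontrivial polynomial relation in $x_2,\ldots,x_k$ with only $O(q^{k-2})$ solutions. Once the polynomials are monic, pairwise coprime and, by Lemma~\ref{endlem1}, squarefree of degree at least $2$, none of them can be a perfect $d_i$-th power, so Lemma~\ref{endlem2} applies directly with no Zsigmondy-type input at all. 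You should replace your substitution step by this genericity-of-coprimality argument for $k\ge 3$, and treat $k=2$ separately (there pairwise coprimality genuinely fails, since the relation $A^{i_2}=B^{i_1}$ holds identically, and Lemma~\ref{endlem3} is the correct tool).
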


\begin{proof} From Lemma~\ref{claim1} and Lemma~\ref{dimlem}, it is clear that 
\begin{equation}\label{dimH}\dim \bigcap_{j=1}^{n+1} \overline{\cH}_k^j=d-n-1.\end{equation}
Thus, it remains to prove that $\bigcap_{j=1}^{n+1} \overline{\cH}_k^j$ is absolutely irreducible.
Assume that $\bigcap_{j=1}^{n+1} \overline{\cH}_k^j$ has $\nu$ absolutely irreducible components in 
$\overline{\mathbb F}_q$. By~\eqref{dimH} and Lemma~\ref{lem:LW} 
 to show that $\nu=1$, it is 
enough to prove that 
\begin{equation}
\label{eq:N_k}N(k,n)= \left| \bigcap_{j=1}^{n+1} \cH_k^j\right|=
 (1 + o(1))q^{d-n-1},
\end{equation}
where $\cH_k^j=\{x\in \mathbb F_q^d: x_1^{j+1}+ \cdots+x_k^{j+1} - x_{k+j}^{j+1}=0\}$.   Notice that $N(k,n)$ is the number of common solutions in $\mathbb F_q^d$ of the following equations
\begin{equation*} 
\begin{array}{ll} & x_1^2+x_2^2 +\cdots+ x_k^2 -x_{k+1}^2=0,\\
&\qquad \qquad \qquad \qquad \vdots\\
&  x_1^{j+1}+x_2^{j+1} +\cdots+ x_k^{j+1} -x_{k+j}^{j+1}=0,\\
&\qquad \qquad \qquad \qquad \vdots\\
 & x_1^{n+2}+x_2^{n+2} +\cdots+ x_k^{n+2} -x_{k+n+1}^{n+2}=0.\\
\end{array}, \end{equation*}

For each $j=1,2,\ldots, n+1$, define
$$ N_j(x_1,x_2,\ldots, x_k)=|\{x_{k+j}\in \mathbb F_q: x_{k+j}^{j+1}=x_1^{j+1}+\cdots+x_k^{j+1} \}|.$$
Since $x_{k+n+2}, \ldots, x_d\in \mathbb F_q$ are free variables and $x_{k+1}, \ldots, x_{k+n+1}\in \mathbb F_q$ depend only on $x_1,\ldots, x_k$, we can write 
$$N(k,n)=\sum_{x_1,\ldots,x_k\in \mathbb F_q} \left(\prod_{j=1}^{n+1} N_j(x_1,\ldots,x_k) \right) q^{d-k-n-1}.$$
In order to prove~\eqref{eq:N_k}, it therefore suffices to show that
\begin{equation}\label{hard} 
\sum_{x_1,\ldots,x_k\in \mathbb F_q} \left(\prod_{j=1}^{n+1} N_j(x_1,\ldots,x_k) \right)=(1+o(1)) q^k.
\end{equation}
For each $j=1,2, \ldots, n+1$, let $d_j=\gcd(j+1, q-1)$ and denote by $\chi_j$ the multiplicative character of order $d_j$. Then, from the orthogonality of 
multiplicative characters it follows that
$$
N_j(x_1, \ldots,x_k) = \sum_{i_j=0}^{d_j-1} \chi_j^{i_j}(x_1^{j+1} +\cdots+ x_k^{j+1}),
$$
see~\cite[Section~3.1]{IwKow04}.
Hence, the left hand side of~\eqref{hard} is written by
\begin{align*}
\sum_{x_1,\ldots,x_k\in \mathbb F_q}& \(\prod_{j=1}^{n+1} N_j(x_1,\ldots,x_k) \) 
 \\
& =\sum_{i_1=0}^{d_1-1} \ldots \sum_{i_{n+1}=0}^{d_{n+1}-1} \sum_{x_1,\ldots,x_k\in \mathbb F_q}  \prod_{j=1}^{n+1}
\chi_{j}^{i_{j}}(x_1^{j+1} +\cdots+x_k^{j+1}). 
\end{align*}
When $(i_1, \ldots i_{n+1})=(0,\ldots,0)$, the sum over $x_1,\ldots, x_k$ is $q^k$, where we use the usual convention that $\chi_0(0)=1$ for the trivial multiplicative character 
$\chi_0$.
Thus, to establish~\eqref{hard}, it is enough to prove that for each $(i_1, \ldots, i_{n+1})\ne (0, \ldots, 0)$ with $i_j=0,1,\ldots, d_j-1$,
$$\sum_{x_2,\ldots,x_k\in \mathbb F_q}\left|\sum_{x_1\in \mathbb F_q} \chi_1^{i_1}(x_1^2+\cdots+x_k^2)\cdots
\chi_{n+1}^{i_{n+1}}(x_1^{n+2} +\cdots+x_k^{n+2})\right|=o(q^k).$$

Now, we define the sets $B, G \subseteq \mathbb F_q^{k-1}$ (of `bad' and `good'
vectors  $(x_2,\ldots, x_k) \in \mathbb F_q^{k-1}$) by
\begin{equation*}
\begin{split}
B=\{(x_2,\ldots, x_k)& \in \mathbb F_q^{k-1}~:\\
&x_2^{j+1}+\cdots+ x_k^{j+1}=0\quad\mbox{for some}~~j=1,2,\ldots, n+1\},
\end{split}
\end{equation*} 
and
$$G=\mathbb F_q^{k-1}\setminus B.$$
For each fixed $\overline x=(x_2,\ldots, x_k)\in G$, define 
$$A_{j}(\overline x)=x_2^{j+1}+\cdots+x_k^{j+1} \quad \mbox{for}~~j=1,2, \ldots, n+1.
$$
Note that $A_{j}(\overline x)\ne 0$ for  $\overline x \in G$ and all $j=1,2, \ldots, n+1$. 

Since $|B|\leq (n+1)(n+2) q^{k-2}$,  it suffices to prove that for each $(i_1, \ldots, i_{n+1})\ne (0, \ldots, 0)$  with $i_j=0,1,\ldots, d_j-1$,
\begin{equation}
\label{End}
\begin{split}
\sum_{\overline x =(x_2,\ldots,x_k)\in G }&\left|\sum_{x_1\in \mathbb F_q} \prod_{j=1}^{n+1}\chi_j^{i_j}(x_1^{j+1}+A_{j}(\overline x))\right|\\
&=
 \sum_{\overline x =(x_2,\ldots,x_k)\in G }\left|\sum_{t\in \mathbb F_q} \prod_{j=1}^{n+1}\chi_j^{i_j}(t^{j+1}+A_{j}(\overline x))\right| =o(q^k). 
\end{split}
\end{equation}   
From the definition of $\chi_j$ and the fact that 
$(i_1,\ldots, i_{n+1}) \ne (0, \ldots, 0)$, notice that  $\chi_j^{i_j}$ is a nontrivial character  for some $j=1,\ldots, n+1$. If $\chi_j^{i_j}$ is a trivial character, then the term $\chi_j^{i_j}(t^{j+1}+A_{j}(\overline x))$ can be replaced by 1. Thus,  it suffices to prove~\eqref{End} under the  assumption that all $\chi_j^{i_j}$ are not-trivial characters.

We now consider the cases  $k=2$  and $k \ge 3$ separately. 

\begin{itemize}

\item  If  $k=2$, we must show that 
$$ \sum_{a\in G}\left|\sum_{t\in \mathbb F_q} \prod_{j=1}^{n+1}\chi_j^{i_j}(t^{j+1}+a^{j+1})\right|   =o(q^2),  $$                                         
for all $(i_1,\ldots, i_{n+1}) \ne (0, \ldots, 0)$  with $i_j=0,1,\ldots, d_j-1$.
Recall that if $a\in G$, then $a\ne 0$, thus
$$
\left|\sum_{t\in \mathbb F_q} \prod_{j=1}^{n+1}\chi_j^{i_j}(t^{j+1}+a^{j+1})\right|
= \left|\sum_{t\in \mathbb F_q} \prod_{j=1}^{n+1}\chi_j^{i_j}(t^{j+1}+1)\right|
$$
and  recalling Lemma~\ref{endlem3}
we obtain the desired estimate.

\item If $k\ge 3$, then it is easy to show that if the
characteristic of $\mathbb F_q$ is sufficiently large,
then for all but $O(q^{k-2})$ choices of 
$\overline x =(x_2, \ldots, x_k) \in G $, the polynomials 
$$
t^{j+1}+A_{j}(\overline x) \in {\mathbb F}_q[t], 
\qquad j=1,  \ldots, n+1
$$
have no pairwise common roots.
Indeed, assume $k\geq 3$.
Let $i_1, i_2\in \{2,3,\ldots, n+2\}$ with $i_1\ne i_2$. Notice that  if $t^{i_1}-A$ and $t^{i_2} - B$ have a common root then
$A^{i_2} = B^{i_1}$.
 For our expressions for $A$ and $B$ in $x_2, ..., x_k$
(assuming that $k\ge 3$) one can easily show that this leads to a
nontrivial equation and thus has $O(q^{k-2})$ solutions. 
For such $(x_2, \ldots, x_k)\in G$, 
the inner sum over $t\in \mathbb F_q$ in~\eqref{End} is
trivially estimated as $q$, and
for the remaining 
choices of $(x_2, \ldots, x_k)\in G$, we apply Lemmas~\ref{endlem1} and~\ref{endlem2}
to estimate the inner sum over $t\in \mathbb F_q$ in~\eqref{End}.
In conclusion, the left hand side of~\eqref{End} is bounded by $O(q^{k-1/2})$.
\end{itemize}

This establishes~\eqref{End} for every $k \ge 2$ and concludes the proof.  
\end{proof}

As we have mentioned, Lemma~\ref{irreducible} implies Proposition~\ref{Pro2}.

\subsection{Proof of Proposition~\ref{Pro3}} 

  The proof is based on the  Affine Jacobian criterion below 
 (see~\cite[Proposition~4.4.8]{Ga02}).
\begin{lemma} \label{Jac}Let $\overline{V} \subseteq\mathbb A^d$ be an irreducible algebraic set given by a system of $s$-polynomial equations $g_j(x)=0$, $j=1,2,\ldots,s$.
Suppose that $v\in \overline{V}$. Then $\overline{V}$ is smooth at $v$ if and only if   the rank of the $s\times d$ Jacobian matrix satisfies
$$
\rank \left[\frac{\partial{g_j}}{\partial{x_i}}(v)\right]_{s\times d}
\ge d-\dim\overline{V}.
$$
\end{lemma}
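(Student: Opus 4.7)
The plan is to derive Lemma~\ref{Jac} from the standard algebraic characterization of smoothness, which identifies smooth points of an irreducible variety with regular points of its local ring, together with the identification of the Jacobian kernel with the Zariski tangent space. I would structure the argument in three steps.

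First, I would identify the kernel of the Jacobian $J(v) = [\partial g_j/\partial x_i(v)]$ with the Zariski tangent space $T_v \overline{V}$ via a first-order Taylor expansion. Writing
\[
g_j(v+tw) = t \sum_{i=1}^d \frac{\partial g_j}{\partial x_i}(v)\, w_i + O(t^2),
\]
one sees that a tangent vector $w \in \overline{\mathbb F}_q^d$ pairs trivially with the cotangent space $\mathfrak{m}_v/\mathfrak{m}_v^2$ of the local ring $\cO_{\overline{V},v}$ exactly when $J(v)w = 0$. Rank--nullity then gives $\dim T_v \overline{V} = d - \rank J(v)$.

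Second, I would invoke the general lower bound $\dim T_v \overline{V} \ge \dim \overline{V}$, valid at every point of an irreducible variety. By Krull's height theorem applied to the Noetherian local ring $\cO_{\overline{V},v}$, one has $\dim_{\overline{\mathbb F}_q} \mathfrak{m}_v/\mathfrak{m}_v^2 \ge \dim \cO_{\overline{V},v}$, and the right-hand side equals $\dim \overline{V}$ since $\overline{V}$ is irreducible. Combined with the first step, this yields the automatic inequality $\rank J(v) \le d - \dim \overline{V}$.

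Third, I would use the standard definition that $v$ is smooth on $\overline V$ precisely when $\cO_{\overline{V},v}$ is a regular local ring, i.e.\ when $\dim_{\overline{\mathbb F}_q} \mathfrak{m}_v/\mathfrak{m}_v^2 = \dim \cO_{\overline{V},v}$. In view of the first two steps this amounts to the equality $\rank J(v) = d - \dim \overline{V}$, which, together with the universal inequality of Step 2, is equivalent to the one-sided bound $\rank J(v) \ge d - \dim \overline{V}$ in the statement.

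The delicate point is Step 1: one needs that the partial derivatives of $g_1,\ldots,g_s$ at $v$ cut out the same linear subspace of the ambient cotangent space as the images of a full generating set of the ideal $I(\overline V)$ would. Since Lemma~\ref{Jac} is quoted as a standard textbook result from~\cite[Proposition~4.4.8]{Ga02}, I would appeal to the verification there for this identification rather than redevelop it from scratch.
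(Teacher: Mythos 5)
The paper does not actually prove this lemma: it is quoted as a known fact with a pointer to \cite[Proposition~4.4.8]{Ga02}, so there is no internal argument to compare against, and your outline (kernel of the Jacobian as the Zariski tangent space, the Krull bound $\dim_{\overline{\mathbb F}_q}\mathfrak{m}_v/\mathfrak{m}_v^2\ge\dim\cO_{\overline V,v}=\dim\overline V$, smoothness as regularity of the local ring) is the standard route and already supplies more detail than the paper itself. In that sense you end where the paper starts: for the one genuinely substantive step you, like the paper, fall back on \cite{Ga02}.

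Be aware, however, that the ``delicate point'' you defer is not a formality but exactly where the statement, read literally, can fail. With only the set-theoretic hypothesis that $\overline V$ is cut out by $g_1,\dots,g_s$, the identification $\ker J(v)\cong T_v\overline V$ in your Step 1 --- and with it the direction ``$v$ smooth $\Rightarrow \rank J(v)\ge d-\dim\overline V$'' --- is false in general: take $\overline V=\{x_1=0\}\subseteq\mathbb A^2$ presented by the single equation $x_1^2=0$; this line is irreducible and smooth everywhere, yet the Jacobian $[\,2x_1\ \ 0\,]$ has rank $0<2-1$ at every point of $\overline V$. What holds unconditionally is only the converse: each $g_j$ lies in $I(\overline V)$, so the row space of $J(v)$ is contained in that of the Jacobian of generators of $I(\overline V)$, whence $\rank J(v)\le d-\dim T_v\overline V\le d-\dim\overline V$; thus $\rank J(v)\ge d-\dim\overline V$ forces $\dim T_v\overline V=\dim\overline V$ and $v$ is smooth. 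The proposition you cite in \cite{Ga02} carries the stronger hypothesis that the $g_j$ generate $I(\overline V)$, which is precisely what your Step 1 needs; so either import that hypothesis explicitly or verify it for the equations at hand, rather than treat it as a bookkeeping detail. This is a defect the paper's own formulation shares (it states the lemma with the loose hypothesis and cites the same source), but your write-up should make the dependence explicit.
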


Now, since $d-k$ is the number of the polynomials $h_j$ in~\eqref{system} defining $\overline{\cH}_k$ which is absolutely irreducible with dimension $k$ by Proposition~\ref{Pro2},  we see from Lemma~\ref{Jac} that
$\overline{\cH}_k$ is smooth away from the origin if and only if 
$$ \rank\left[ \frac{\partial{h_j}}{\partial{x_i}}(x)\right]_{(d-k)\times d}=d-k$$
for all $x\in \overline{\cH}_k \setminus\{(0,\ldots,0)\}$.
Since we have assumed that the characteristic of $\mathbb F_q$ is sufficiently large,  it is clear from the Gauss elimination that 
$$\rank\left[ \frac{\partial{h_j}}{\partial{x_i}}(x)\right]_{(d-k)\times d}= \rank J_{k,d}(x),$$
where $J_{k,d}(x)$ denotes the $(d-k)\times d$ matrix given by the concatenation
$$J_{k,d}(x)  = [W_{k,d}(x)\, \| \, D_{k,d}(x) ]
$$
of the Wandermonde      
$$W_{k,d}(x)  = \left[ \begin{matrix}
x_i^j \end {matrix}\right]_{(d-k)\times k},
$$ 
and the diagonal matrix      
$$D_{k,d}(x)  = 
\left[ \begin{matrix}
-x_{k+1}& 0&  0&  \cdots&0\\
 0& -x_{k+2}^2&   0&      \cdots&0\\
 \vdots& \vdots& \vdots&   \vdots&  \vdots \\
 0& 0 &  0&  0&   -x_{d}^{d-k}
         \end {matrix}\right]. $$

In order to complete the proof of Proposition~\ref{Pro3},  it therefore suffices to prove the following two statements: 
\begin{itemize}
\item[{\bf (A1)}] if $d-k\geq 4$ and $k\geq 2$, then there exists $x\in \overline{\cH}_k\setminus \{(0,\ldots,0)\}$ with $\rank J_{k,d}(x) < d-k$;
\item[{\bf (A2)}]   if $d-k=1,2,3$ and $k\geq 2$, then $\rank J_{k,d}(x) = d-k$ for all $x \in\overline{\cH}_k\setminus \{(0,\ldots,0)\}$. 
\end{itemize}

First, let us prove {\bf(A1)}. Suppose that $k\geq 2$ is an even integer.
For each $l=1,2,\ldots, d-k$, choose an $\alpha_l\in \overline{\mathbb F}_q$ with $\alpha_l^{l+1}=k\cdot 1$, and
define
$$
x_{k+l}=\left\{\begin{array}{ll} 0\quad&\text{for}~~l\ge2~\text{even,}\\
                \alpha_l\quad&\text{for}~~l\ge 1~\text{odd}.\end{array}\right.
$$
Letting $x=(1,-1,\ldots,1,-1, x_{k+1}, \ldots, x_d)$, it is easy to check that $x\in \overline{\cH}_k\setminus\{(0,\ldots,0)\}$.
Since $d-k\geq 4$,  the matrix $J_{k,d}(x)$ has at least four rows, and its second row and fourth row are exactly same.
Thus, the rank of $J_{k,d}(x)$ must be less than $d-k$.
Next, assume that $k\geq 3$ is an odd integer. For each $l=1,2,\ldots, d-k$, select a $\beta_l\in \overline{\mathbb F}_q$ with $\beta_l^{l+1}=(k-1)\cdot1$, and
define
$$ x_{k+l}=\left\{\begin{array}{ll} 0\quad&\text{for}~~l\ge0~\text{even,}\\
            \beta_l\quad&\text{for}~~l\ge 1~\text{odd}.\end{array}\right.
$$
Taking $x=(1,-1,\ldots,1,-1,0, x_{k+1}, \ldots, x_d)$, 
we also see that $x\in \overline{\cH}_k\setminus\{(0,\ldots,0)\}$,
and the second row and the fourth row of $J_{k,d}(x)$ are  same.
Thus, the rank of $J_{k,d}(x)$ is less than $d-k$, which completes the proof of the statement ${\bf(A1)}$.

Now, we prove the statement {\bf (A2)}.
If $d-k=1$ and $k\geq2$, then  {\bf (A2)} is clearly true, because 
$$
J_{k,d}(x)=\left[ \begin{matrix}x_1 & x_2 & \cdots 
& x_{d-1} & -x_{d}\end{matrix}\right] 
\neq \left[ \begin{matrix}0&0\cdots & 0 &0\end{matrix}\right]
$$ 
for $x\neq (0,\ldots,0)$.
Assume that  $d-k=2$ and $k\geq2$. We must show that 
$$ \rank J_{k,d}(x) =\rank\left[ \begin{matrix}
x_1 &  x_2 & \cdots & x_{d-2}&  -x_{d-1}& 0\\
x_1^2 &   x_2^2&\cdots &  x_{d-2}^2 &   0& -x_{d}^2 \end{matrix}\right]=2
$$
for all $x\in \overline{\cH}_k\setminus \{(0,\ldots,0)\}$, 
where 
$$\overline{\cH}_k=\{x\in \mathbb A^d~:~x_1^2+\cdots+x_{d-2}^2-x_{d-1}^2 = x_1^3+\cdots+x_{d-2}^3-x_d^3=0\}.
$$
Notice that if $x=(x_1,\ldots,x_d)\in  \overline{\cH}_k\setminus \{(0,\ldots,0)\}$, then $x_j\neq 0$ for some $j=1,2,\ldots, d-2$.
Without loss of generality, we therefore assume that $x_1\neq 0$. Letting $u_j=x_j/x_1$ for $j=2,3,\ldots, d$, it is enough to show that
\begin{equation}\label{mainrank} \rank\left[ \begin{matrix}
1 &  u_2 & \cdots & u_{d-2}&  -u_{d-1}& 0\\
1 &   u_2^2&\cdots &  u_{d-2}^2 &   0& -u_{d}^2 \end{matrix}\right]=2,\end{equation}
where $(u_2,u_3,\ldots, u_d)\in \mathbb A^{d-1}$ satisfies
\begin{equation}\label{equ2}\left\{\begin{matrix}
 1+u_2^2+\cdots+u_{d-2}^2-u_{d-1}^2 &=0\\
 1+u_2^3+\cdots+u_{d-2}^3-u_{d}^3&=0 \end{matrix}\right.. \end{equation}
Notice that if $u_{d-1}, u_{d}\neq 0$,  then~\eqref{mainrank}
 holds, because 
 $$\rank \left[\begin{matrix} -u_{d-1}& 0\\ 0& -u_d^2\end{matrix}\right]=2.
 $$  
If $u_j=0$ or $1$ for all $j=2,3,\ldots, d-2$, then we see from~\eqref{equ2} that $u_{d-1}, u_d\neq 0$ and so there is nothing to prove.
On the other hand, if  $u_j\neq 0, 1$ for some $j=2,3, \ldots,(d-2)$ then
 $$\det \left[\begin{array}{cc} 1& u_j\\ 1& u_j^2\end{array}\right]=u_j(u_j-1) \neq 0 \quad\text{or}\quad
\rank \left[\begin{array}{cc} 1& u_j\\ 1& u_j^2\end{array}\right]=2.$$
Thus~\eqref{mainrank} is also true and we completes the proof of the statement {\bf(A2)} in the case when $d-k=2$ and $k\geq2 $.
Finally let us prove the statement {\bf(A2)}  when $d-k=3$ and $k\geq 2$.
Following the previous arguments,  our task is to show that
\begin{equation}\label{mainrank1} \rank\left[ \begin{array}{ccccccc}
1 &  u_2 & \cdots & u_{d-3}&  -u_{d-2}& 0&0\\
1 &   u_2^2&\cdots &  u_{d-3}^2 &   0& -u_{d-1}^2&0\\
1 &   u_2^3&\cdots &  u_{d-3}^3&0 &   0& -u_{d}^3 \end{array}\right]=3,\end{equation}
where $(u_2,u_3,\ldots, u_d)\in \mathbb A^{d-1}$ satisfies
\begin{equation}\label{equ3}\left\{\begin{array}{ll}
 1+u_2^2+\cdots+u_{d-3}^2-u_{d-2}^2 &=0,\\
 1+u_2^3+\cdots+u_{d-3}^3-u_{d-1}^3&=0,\\
 1+u_2^4+\cdots+u_{d-3}^4-u_{d}^4&=0. \end{array}\right. \end{equation}
 
{\bf Case~1}: Suppose that $u_j=0$ or 1 for all $j=2,3,\ldots, d-3$.
Then it follows from~\eqref{equ3} that $u_{d-2}, u_{d-1}, u_d \neq 0$, which implies that
$$\det\left[ \begin{array}{ccc}
  -u_{d-2}&  0&0\\
  0 &  -u_{d-1}^2 &   0\\
   0&  0 &   -u_{d}^3 \end{array}\right]\neq 0$$
hence
$$
\rank\left[ \begin{array}{ccc}
  -u_{d-2}&  0&0\\
  0 &  u_{d-1}^2 &   0\\
   0&  0 &   -u_{d}^3 \end{array}\right]=3. $$
Thus~\eqref{mainrank1} also follows.

{\bf Case 2}: Suppose that $u_i, u_j\neq 0, 1$ with $u_i\neq u_j$ for some $i,j=2,3,\ldots, d-3$.
Then it follows that
$$\det\left[\begin{matrix} 1&u_i&u_j\\
                                        1&u_i^2&u_j^2\\
                                         1&u_i^3&u_j^3\end{matrix}\right]=u_iu_j(u_i-1)(u_j-1)(u_j-u_i)\neq 0.$$
Thus 
$$\rank\left[\begin{matrix} 1&u_i&u_j\\
                                        1&u_i^2&u_j^2\\
                                         1&u_i^3&u_j^3\end{matrix}\right]=3,
$$
which implies~\eqref{mainrank1}.

{\bf Case 3}: Suppose that $u_j\neq 0,1$ for some $j=2,3,\ldots, d-3$, and $u_i=u_j$ if $u_i\neq 0,1$ for $i=2,3,\ldots, d-3$. Let 
\begin{equation*}
\begin{split}
a&=|\{l\in \{2,3,\ldots, d-3\}~:~u_l=1\}|, \\ 
b&=|\{l\in \{2,3,\ldots, d-3\}~:~u_l\neq 0,1\}|.
\end{split}
\end{equation*}
Then $a\geq 0$ and $b\geq 1$ are integers.
Thus~\eqref{equ3} is same as
\begin{equation}\label{equ4} \begin{array}{ll}
(1+a)1+bu_j^2-u_{d-2}^2&=0,\\
(1+a)1+bu_j^3-u_{d-1}^3&=0,\\
(1+a)1+bu_j^4-u_d^4&=0,\end{array}
\end{equation}
where $(1+a)\in \mathbb N$. 
 
We now claim that either $u_{d-2}\neq 0$ or $u_d\neq 0$. To see this, assume that
$u_{d-2}, u_d=0$. Then from~\eqref{equ4} we see that $(1+a)1+bu_j^2=0=(1+a)1+bu_j^4$.
This implies that $b u_j^2(u_j-1)(u_j+1)=0$. Thus we conclude that $u_j=-1$, because $u_j\neq 0,1$ and $b\in \{1, \ldots, d-4\}$ (assuming that the
characteristic of $\mathbb F_q$ is sufficiently large). However, since $(1+a)1+b(-1)^2= (1+a+b)1\neq 0$,  it is impossible that $u_j=-1$ (again assuming that the
characteristic of $\mathbb F_q$ is sufficiently large) and the claim is justified.

If $u_{d-2}\neq 0$, then~\eqref{mainrank1} follows by the observation that
$$\det\left[\begin{array}{ccc} 1&u_j&- u_{d-2}\\
                                         1&u_j^2& 0\\
                                         1 &u_j^3&0\end{array} \right]=-u_{d-2}u_j^2(u_j-1)\neq 0.$$
On the other hand, if $u_d\neq 0$, then~\eqref{mainrank1} also follows by the observation that
$$\det\left[\begin{array}{ccc} 1&u_j&0\\
                                         1&u_j^2& 0\\
                                         1 &u_j^3&-u_d^3\end{array} \right]=-u_{d}^3u_j(u_j-1)\neq 0.$$
By  {\bf Case 1}, {\bf Case 2} and {\bf Case 3}, we establish the statement {\bf (A2)} in the case when $d-k=3$ and $k\geq 2$.
This concludes the proof of  Proposition~\ref{Pro3}.

\section*{Acknowledgements}

The authors are grateful to Anthony Flatters and Tom Ward for useful discussions
and in particular for the idea of the proof of Lemma~\ref{endlem15}. 

The first author was supported by  the research grant of the Chungbuk National University in 2012 and  Basic Science Research Program through the National
Research Foundation of Korea  funded by the Ministry of Education, Science and
Technology (2012010487). The third author was supported  by the  research grant of the 
Australian Research Council (DP130100237).


\begin{thebibliography}{7}

\bibitem{CvdDM} Z. Chatzidakis, L. van den Dries and A. Macintyre,
 \emph{Definable sets over finite fields,}  J. Reine
Angew. Math., {\bf 427} (1992), 107--135.

\bibitem{Co94} T.~Cochrane, \emph{Exponential sums and the distribution of solutions of congruences,} Inst. of Math., Academia Sinica, Taipei, 1994.

\bibitem{CSW08} A.~Carbery, B.~Stones and J.~Wright, \emph{Averages in vector spaces over finite fields,} Math. Proc. Camb. Phil. Soc., {\bf 144} (2008),  13--27.

\bibitem{Dv09}Z. Dvir, \emph{On the size of Kakeya sets in finite fields,} J. Amer. Math. Soc., {\bf 22} (2009), 1093--1097.

\bibitem{EOT} J. S. Ellenberg, R. Oberlin and T.~Tao, \emph{The Kakeya set and maximal conjectures for algebraic varieties over finite fields,}  Mathematika, {\bf 56} (2012), 1--25. 

\bibitem{FW11} A. Flatters and T. Ward, \emph{A polynomial Zsigmondy theorem,} J. Algebra, {\bf 343} (2011), 138--142.

\bibitem{Ga02} A. Gathmann, \emph{Algebraic Geometry,}  Lecture Notes, 
Univ. of Kaiserslautern, 2002/2003, available at  {\tt
http://www.mathematik.uni-kl.de/$\sim$ gathmann/class/alggeom-2002/main.pdf}.


\bibitem{Gr03} B.~J.~Green, \emph{Restriction and Kakeya phenomena,} Lecture Notes, Univ. 
of Cambridge, 2003, available at  {\tt https://www.dpmms.cam.ac.uk/\~{}bjg23/rkp.html}.

\bibitem{IK09} A. Iosevich and D. Koh,  \emph{Extension theorems for paraboloids in
the finite field setting,} Math. Z., {\bf 266} (2010),  471--487.

\bibitem{IS96} A. Iosevich and E. Sawyer, \emph{Sharp $L^p-L^r$ estimates for a class of averaging operators,} Ann. Inst. Fourier, Grenoble, {\bf 46}  (1996), 1359--1384.

\bibitem{IwKow04} H.~Iwaniec and E.~Kowalski, \emph{Analytic Number Theory,} 
Colloquium Publications, vol.~53, 2004.

\bibitem{Jo01}  F. Jones, \emph{Lebesgue integration on Euclidean space,} Revised Ed., Jones and Bartlett, Sudbury, 2001.

\bibitem{Koh13} D. Koh, \emph{Averaging operators over nondegenerate quadratic surfaces in finite fields,} Forum Math., to appear.

\bibitem{KS11} D. Koh and C. Shen, \emph{Extension and averaging operators for finite fields,} Proc. Edinb. Math. Soc., (to appear).


\bibitem{LL10} A.~Lewko and M.~Lewko, \emph{Endpoint restriction estimates for the paraboloid over finite fields,} Proc. Amer. Math. Soc., {\bf 140} (2012), 2013--2028.

\bibitem{Li73} W. Littman, \emph{$L^p-L^q$ estimates for singular integral operators,} Proc. Symp. Pure Math., {\bf 23} (1973), 479--481.
 

\bibitem{LW54} S. Lang and A. Weil, \emph{Number of points on varieties in finite fields,} Amer. J. Math., {\bf 76} (1954), 819--827.

\bibitem{MT04} G. Mockenhaupt  and T. Tao, \emph{Restriction and Kakeya phenomena for finite fields,} Duke Math. J., {\bf 121} (2004),  35--74.

\bibitem{SS90} I. E. Shparlinski and A. N. Skorobogatov, \emph{Exponential sums and rational points on complete intersections,} Mathematika, {\bf 37} (1990), 201--208.

\bibitem{St93} E.M.Stein, \emph{Harmonic analysis,} Princeton University Press, 1993.

\bibitem{Wa97} D. Wan, \emph{Generators and irreducible polynomials over finite fields,} 
Math. Comput., vol. {\bf 66}   (1997), 1195--1212.

\bibitem{We48} A. Weil, \emph{On  some exponential sums,} 
Proc. Nat. Acad, Sci., U.S.A., {\bf 34} (1948), 204--207.

\bibitem{Wo99} T. Wolff, \emph{Recent work connected with the Kakeya problem,} Prospects in Mathematics (Princeton, NJ, 1996), Amer. Math. Soc., Providence, RI, 1999, 129--162.

\end{thebibliography}
\end{document}